\newcommand{\bbC}{\mathbb{C}}
\newcommand{\bbE}{\mathbb{E}}
\newcommand{\bbK}{\mathbb{K}}
\newcommand{\bbN}{\mathbb{N}}
\newcommand{\bbR}{\mathbb{R}}
\newcommand{\bbT}{\mathbb{T}}
\newcommand{\bbZ}{\mathbb{Z}}
\newcommand{\calB}{\mathcal{B}}
\newcommand{\calC}{\mathcal{C}}
\newcommand{\calF}{\mathcal{F}}
\newcommand{\calP}{\mathcal{P}}
\DeclareMathOperator{\id}{id}
\DeclareMathOperator{\one}{\mathbbm{1}}
\newcommand{\argument}{\mathord{\,\cdot\,}}
\DeclareMathOperator{\linSpan}{span}
\newcommand{\restricted}[1]{|_{#1}}
\theoremstyle{definition}
\newtheorem{definition}{Definition}[section]
\newtheorem{remark}[definition]{Remark}
\newtheorem{example}[definition]{Example}
\theoremstyle{plain}
\newtheorem{proposition}[definition]{Proposition}
\newtheorem{lemma}[definition]{Lemma}
\newtheorem{theorem}[definition]{Theorem}
\newtheorem{corollary}[definition]{Corollary}
\numberwithin{equation}{section}
\begin{document}

\title[Lower dimension bounds for DMD]{Entropy based lower dimension bounds for finite-time prediction of Dynamic Mode Decomposition algorithms}

\author[T.\ Hauser]{Till Hauser
    \orcidlink{0000-0002-2580-3673}}
\address[T.\ Hauser]{
    Pontificia Universidad Católica de Chile\\
    Facultad de Matemáticas\\
    Edificio Rolando Chuaqui, Campus San Joaquín\\
    Avda.\ Vicuña Mackenna 4860, Macul, Chile}
\email{hauser.math@mail.de}

\author[J.\ Hölz]{Julian Hölz
    \orcidlink{0000-0001-5058-9210}}
\address[J.\ Hölz]{University of Wuppertal, Gaussstr. 20, 42119 Wuppertal, Germany}
\email{hoelz@uni-wuppertal.de}

\subjclass[2020]{37A05, 37M10, 37M25, 65P99}
\keywords{Dynamic Mode Decomposition, Koopman operator, Dynamical Systems, Measure-preserving dynamics}
\date{\today}
\begin{abstract}
    Motivated by Dynamic Mode Decomposition algorithms, we provide lower bounds on the dimension of a finite-dimensional subspace $F \subseteq \mathrm{L}^2(\mathrm{X})$ required for predicting the behavior of dynamical systems over long time horizons. We distinguish between two cases: (i) If $F$ is determined by a finite partition of $X$ we derive a lower bound that depends on the dynamical measure-theoretic entropy of the partition. (ii) We consider general finite-dimensional subspaces $F$ and establish a lower bound for the dimension of $F$ that is contingent on the spectral structure of the Koopman operator of the system, via the approximation entropy of $F$ as studied by Voiculescu. Furthermore, we motivate the use of delay observables to improve the predictive qualities of Dynamic Mode Decomposition algorithms.
\end{abstract}

\maketitle

\section{Introduction}

In recent years, a family of data-driven algorithms known as ``Dynamic Mode Decompositions'' (DMD) has risen to popularity.
We refer to~\cite{Colbrook-MultiverseDMDAlgorithms-2024} for a survey of different DMD algorithms.
One particular member of the family of algorithms is the extended Dynamic Mode Decomposition (eDMD), which allows for partial observation of the system by non-linear observables. Recent developments~\cite{KordaMezic-OnConvergenceEDMDToKoopman-2018} have shown that methods of ergodic theory can be applied to study the limiting behavior of the algorithm in the time-series limit. It is further shown in~\cite{KordaMezic-OnConvergenceEDMDToKoopman-2018} that eDMD converges in the subspace limit.
In this article we will use various notions of entropy from ergodic theory to investigate the subspace limit under the additional assumption that our goal is to use the DMD approximation to predict future values of the observables. By doing this we characterize the rate of convergence of the limit in~\cite[Theorem~5]{KordaMezic-OnConvergenceEDMDToKoopman-2018}.

For this we need the following standard notions.
Let $\mathrm{X} = (X, \Sigma, \mu)$ be a probability space.
A $\Sigma$-measurable mapping $\varphi \colon X \to X$ is called \emph{measure-preserving}, whenever
\begin{align*}
    \mu(A) = \mu(\varphi^{-1}(A))
\end{align*}
holds for all $A \in \Sigma$. In this case the pair $(\mathrm{X}, \varphi)$ is called a \emph{measure-preserving dynamical system}. A measure-preserving dynamical system $(\mathrm{X}, \varphi)$ is called \emph{invertible} if there exists a measure preserving mapping $\varphi^{-1} \colon X \to X$ such that $\mu$-almost surely $\varphi^{-1} \circ \varphi = \varphi \circ \varphi^{-1} = \id_X$.

The \emph{Koopman} or \emph{composition operator} (on $\mathrm{L}^2(\mathrm{X})$) of a measure-preserving dynamical system $(\mathrm{X}, \varphi)$ is the linear operator defined by
\begin{align*}
    T_\varphi \colon \mathrm{L}^2(\mathrm{X}) \to \mathrm{L}^2(\mathrm{X}), \quad f \mapsto f \circ \varphi.
\end{align*}
Here we will view $\mathrm{L}^2(\mathrm{X})$ as the space of square integrable functions $f \colon X \to \bbK$ that are identified when they agree on all $\mu$-null sets and endowed with the usual $2$-norm. The field $\bbK$ is either $\bbR$ or $\bbC$, although in Section~\ref{sec:theSpectralPerspective} we will require $\bbK = \bbC$ to do spectral theory.
The Koopman operator can be viewed as a linearization of $\varphi$
that reverses time.

The extended Dynamic Mode Decomposition algorithm aims at approximating $T_\varphi$ on a closed linear subspace $F \subseteq \mathrm{L}^2(\mathrm{X})$, which is usually finite-dimensional. More precisely, it seeks to construct an operator $\hat{T}_\varphi \colon F \to F$ that minimizes the $\mathrm{L^2}$-error
\begin{align}
    \label{eq:objective-DMD}
    {\lVert \hat{T}_\varphi f  - T_\varphi f \rVert}_2
\end{align}
simultaneously for every $f \in F$. Let $P_F \colon \mathrm{L}^2(\mathrm{X}) \to \mathrm{L}^2(\mathrm{X})$ be the orthogonal projection onto $F$. Since for all $f, g \in F$ the inequality
\begin{align*}
    {\lVert P_F T_\varphi f - T_\varphi f \rVert}_2 \leq {\lVert g - T_\varphi f \rVert}_2.
\end{align*}
holds, a precise solution $\hat T_\varphi$ to this optimization problem exists and is given by the operator $\hat T_\varphi \coloneqq P_F T_\varphi \restricted{F}$. It has been shown in~\cite[Theorem~2]{KordaMezic-OnConvergenceEDMDToKoopman-2018} that, under some assumption on a spanning set $\omega$ of $F$ (called the \emph{dictionary}), the extended Dynamical Mode Decomposition algorithm converges to the linear operator $P_F T_\varphi \restricted{F}$ in the time-series limit.

Note that with limited data the eDMD algorithm will not exactly return $\hat T_\varphi$ exactly but only an approximation of $\hat T_\varphi$. We denote this approximation by $\hat T$ and use it as an approximation of the original Koopman operator $T_\varphi$ on the subspace $F$. An algorithm that computes the best approximation $\hat T_\varphi$ exactly is known as the ``analytic Dynamic Mode Decomposition'~\cite[Section~7]{KordaMezic-OnConvergenceEDMDToKoopman-2018}.

Let  $\hat T \colon F \to F$ be any approximation of $T_\varphi$ on the subspace $F$ (not necessarily an approximation from the DMD algorithms). We can use $\hat T$ to approximate trajectories $f, T_\varphi f, \dots, T_\varphi^{K - 1}f$ of the Koopman operator with trajectories of the approximation $f, \hat T f , \dots, \hat T^{K - 1} f$. Ideally we would like the error
\begin{align*}
    {\lVert \hat T^k f - T_\varphi^k f \rVert}_2
\end{align*}
to be small for all times $k$ in a given finite time horizon $\{0, \dots, K - 1\}$.

It is fairly simple to provide a rough error bound for trajectory errors. If the approximation $\hat T$ is assumed to be ``good enough'', meaning that there is an $\varepsilon > 0$ such that $\lVert \hat T f - T_\varphi f \rVert \leq \varepsilon \lVert f \rVert$ for each $f \in F$ and if $\hat T$ is a contraction, we may conclude that
\begin{align*}
    \lVert \hat T^{k} f - T_\varphi^{k} f \rVert = \Bigg\lVert \sum_{j = 0}^{k - 1} T_\varphi^j (\hat T - T_\varphi) \underbrace{\hat T^{(k - 1) - j} f}_{\in F} \Bigg\rVert \leq k \varepsilon \lVert f \rVert
\end{align*}
for all $f \in F$ and all $k \in \bbN$. For small times this estimate provides bounds on the prediction error, however, for large $k$ this estimate does not provide any value as the bound $\lVert \hat T^{k} f - T_\varphi^{k} f \rVert \leq 2 \lVert f \rVert$ is trivial for every $f \in F$.

Instead of finding a bound that grows linearly in $k$, we may ask for conditions on a subspace $F \subseteq \mathrm{L}^2(\mathrm{X})$ such that for some $\varepsilon > 0$ and some time horizon $K \in \bbN$ we have that
\begin{align}
    \label{eq:error-bound-finite-time-horizon}
    \lVert \hat T^{k} f - T_\varphi^{k} f \rVert \leq \varepsilon \lVert f \rVert
\end{align}
holds for all $f \in F$ and all $k \in \{0, \dots, K - 1\}$.
In case $\hat T = \hat T_\varphi$ it has been shown in~\cite[Theorem~5]{KordaMezic-OnConvergenceEDMDToKoopman-2018} that such a subspace exists and that, if given an orthonormal base of $(f_n)_{n \in \bbN}$ of $\mathrm{L}^2(\mathrm{X})$, then the subspace $F$ can be chosen to be the linear span of $f_1, \dots, f_N$ for some large enough $N \in \bbN$.

In numerical analysis, a natural question arises: What is the minimal dimension of a linear subspace $F$ that provides the desired error bound~\eqref{eq:error-bound-finite-time-horizon}. In this article, we establish lower bounds on the dimension of $F$ required for accurately predicting the behavior of dynamical systems over long time horizons. In this endeavor, we distinguish between two cases:

\subsection{Lower bounds by measure-theoretic entropy}

Consider a measure-pre\-{}serving system $(\mathrm{X},\varphi)$.
Measure-theoretic entropy $h_\mu(\varphi, \alpha)$ of a finite $\Sigma$-measurable partition $\alpha$ with respect to $\varphi$ was introduced by Kolmogorov and Sinai and has since then become a cornerstone of ergodic theory. We will give a precise definition of this concept in Section~\ref{sec:finite-time-prediction-measure-entropy} and recommend~\cite{Walters-IntroductionToErgodicTheory-1982} for an excellent exposition.
Roughly speaking the measure-theoretical entropy of $\alpha$ is a quantification of the unpredictibility present in a measure-preserving system observing it at the resolution $\alpha$.
It is thus natural to ask, whether the presence of measure-theoretical entropy for a finite $\Sigma$-measurable partition $\alpha$ limits the possibilities of long term predictions via DMD algorithms.
For this we assume that the subspace $F$ is of the form
\begin{align*}
    F = \mathrm{L}^2(\mathrm{X} \mid \alpha)  \coloneqq \linSpan \{ \one_A : A \in \alpha \}
\end{align*}
for some finite $\Sigma$-measurable partition $\alpha$ of $X$, i.e., we choose the dictionary to consist only of functions that tell us whether the state of the system is currently in some set $A \in \alpha$. In this case, we can derive a lower bound for the dimension of $F$, which depends on the measure-theoretic entropy $h_\mu(\varphi, \alpha)$ of the partition $\alpha$ with respect to $\varphi$.
Note that this choice of a subspace $F$ is used in the multiplicative Dynamic Mode Decomposition algorithm (see~\cite[Section~3]{BoulleColbrook-MultiplicativeDynamicModeDecomposition-2024}) and has been considered in~\cite[Sections~3.2 and~4.1]{SchuetteKoltaiKlus-OnNumericalApproximationPerronFrobeniusKoopman-2016}. See Subsection~\ref{subsec:proofOfTheoremMeasureTheoreticalEntropyLowerBound} for the proof of the following.

\begin{theorem}
    \label{thm:exponential_lower_bound_dimension}
    Let $(\mathrm{X}, \varphi)$ be a measure-preserving system, $\varepsilon > 0$ and $\alpha$ be a finite $\Sigma$-measurable partition of $X$. Then there exists $K_0 \in \bbN$ and $\delta > 0$ that satisfy the following: Whenever for a linear operator $\hat T \colon \mathrm{L}^2(\mathrm{X} \mid \alpha) \to \mathrm{L}^2(\mathrm{X} \mid \alpha)$ and some $K \geq K_0$ the assertion
    \begin{align*}
        \lVert \hat T^k f - T_\varphi^k f \rVert \leq \delta \lVert f \rVert
    \end{align*}
    holds for all $f \in F$ and $k \in \{0, \dots, K - 1\}$, we conclude that
    \begin{align}\label{eq:thm:exponential_lower_bound_dimension:condition}
        \lvert \alpha \rvert \geq \exp(K ( h_\mu(\varphi, \alpha) - \varepsilon)).
    \end{align}
    Here $\delta$ only depends on the system $(\mathrm{X}, \varphi)$, the choice of $\varepsilon$ and the cardinality of $\alpha$.
\end{theorem}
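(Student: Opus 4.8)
The plan is to exploit that every element of $F=\mathrm{L}^2(\mathrm X\mid\alpha)$ is $\alpha$-measurable, i.e.\ constant on the atoms of $\alpha$. Writing $\alpha=\{A_1,\dots,A_m\}$ with $m=\abs{\alpha}$, the indicators $\one_{A_1},\dots,\one_{A_m}$ form a spanning set of $F$, and since $T_\varphi^k\one_{A_i}=\one_{\varphi^{-k}A_i}$, applying the hypothesis to each of them reads $\norm{\hat T^k\one_{A_i}-\one_{\varphi^{-k}A_i}}\le\delta\sqrt{\mu(A_i)}$. Because $\hat T^k\one_{A_i}\in F$ is constant on every atom $A_j$, restricting to $A_j$ this says the genuine indicator $\one_{\varphi^{-k}A_i}$ is being approximated there by a constant. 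First I would square and sum over $i$, using $\sum_i\mu(A_i)=1$, to obtain $\sum_{i=1}^m\norm{\hat T^k\one_{A_i}-\one_{\varphi^{-k}A_i}}^2\le\delta^2$ for each $k\in\{0,\dots,K-1\}$.

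The second step converts this $\mathrm L^2$-bound into a bound on conditional entropy. Set $p_{ijk}=\mu(A_j\cap\varphi^{-k}A_i)/\mu(A_j)$. Any constant approximation of $\one_{\varphi^{-k}A_i}$ on $A_j$ has squared $\mathrm L^2$-error at least $\mu(A_j)\,p_{ijk}(1-p_{ijk})$, so the previous estimate forces $\sum_{j=1}^m\mu(A_j)\sum_{i=1}^m p_{ijk}(1-p_{ijk})\le\delta^2$. For fixed $j$ the numbers $(p_{ijk})_i$ are the conditional distribution of $\varphi^{-k}\alpha$ on $A_j$, and a Fano-type grouping inequality bounds its entropy by $h_m(t)$, where $t=\sum_i p_{ijk}(1-p_{ijk})$ and $h_m(t)=-t\log t-(1-t)\log(1-t)+t\log(m-1)$. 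As $h_m$ is concave and increasing on the relevant interval $[0,(m-1)/m]$, Jensen's inequality together with the above yields $H_\mu(\varphi^{-k}\alpha\mid\alpha)\le h_m(\delta^2)$; subadditivity of conditional entropy then gives $H_\mu\big(\bigvee_{k=0}^{K-1}\varphi^{-k}\alpha\mid\alpha\big)\le K\,h_m(\delta^2)$.

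To conclude, I would choose $\delta>0$ small enough, depending only on $\varepsilon$ and $m=\abs{\alpha}$, that $h_m(\delta^2)\le\varepsilon$; this is possible because $h_m(0)=0$ and $h_m$ is continuous. Writing $\beta_K=\bigvee_{k=0}^{K-1}\varphi^{-k}\alpha$ and using $\beta_K\ge\alpha$, the chain rule gives $H_\mu(\beta_K)=H_\mu(\alpha)+H_\mu(\beta_K\mid\alpha)$, whence
\[
\log\abs{\alpha}\ge H_\mu(\alpha)=H_\mu(\beta_K)-H_\mu(\beta_K\mid\alpha)\ge K\,h_\mu(\varphi,\alpha)-K\varepsilon ,
\]
where I used $H_\mu(\alpha)\le\log\abs{\alpha}$ and $\tfrac1K H_\mu(\beta_K)\ge h_\mu(\varphi,\alpha)$ (the defining sequence for dynamical entropy being non-increasing). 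Exponentiating gives~\eqref{eq:thm:exponential_lower_bound_dimension:condition}, and since the argument is valid for every $K$, one may take $K_0=1$.

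The main obstacle is the entropy estimate in the second step: turning a small $\mathrm L^2$-prediction error into smallness of $H_\mu(\varphi^{-k}\alpha\mid\alpha)$. The underlying reason it works is that an $\alpha$-measurable function can only detect which atom of $\alpha$ a point lies in, so accurate prediction of $\one_{\varphi^{-k}A_i}$ forces each atom of $\alpha$ to sit almost entirely inside a single atom of $\varphi^{-k}\alpha$. Quantitatively, this is exactly where the dependence of $\delta$ on $\abs{\alpha}$ becomes unavoidable, since the modulus with which the entropy of a near-deterministic distribution decays to zero degrades as the number of atoms grows.
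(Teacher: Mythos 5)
Your proof is correct, and while it shares the paper's overall skeleton --- convert the prediction hypothesis into smallness of $H_\mu(\varphi^{-k}(\alpha)\mid\alpha)$ for each $k$, then conclude by subadditivity of conditional entropy and the chain rule $H_\mu(\beta_K)=H_\mu(\alpha)+H_\mu(\beta_K\mid\alpha)$ --- the key technical step is genuinely different. The paper proves the conversion (Lemma~\ref{lem:continuity-conditional-expectation}, adapted from Walters) by partition surgery: it forms the sets where the conditional probability exceeds $1/2$, builds an auxiliary partition $\gamma$ with $\alpha\vee\beta\subseteq\gamma\vee\beta$, and bounds $H_\mu(\gamma)$. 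You instead use that every element of $F$ is constant on atoms, so the best constant approximation of $\one_{\varphi^{-k}A_i}$ on $A_j$ has squared error exactly $\mu(A_j)\,p_{ijk}(1-p_{ijk})$ --- this is the $\mathrm{L}^2$ analogue of the paper's Lemma~\ref{lem:partitionInequalityI} and implicitly replaces Lemma~\ref{lem:control} --- and then pass to entropy by Fano's inequality plus concavity and Jensen, getting the closed-form bound $H_\mu(\varphi^{-k}(\alpha)\mid\alpha)\le h_m(\delta^2)$. This quantitative route also sharpens the endgame: since $\tfrac1K H_\mu(\beta_K)\ge h_\mu(\varphi,\alpha)$ for \emph{every} $K$ (Fekete's lemma gives $\lim=\inf$; the sequence is in fact non-increasing), you may take $K_0=1$, whereas the paper needs $K_0$ large to approximate the limit within $\varepsilon/2$. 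Two small points you should make explicit to seal the Fano step: with $P_e\coloneqq 1-\max_i p_{ijk}$ one has $P_e\le t_{jk}$ (because $\sum_i p_{ijk}^2\le\max_i p_{ijk}$) and $t_{jk}\le (m-1)/m$ automatically (Cauchy--Schwarz gives $\sum_i p_{ijk}^2\ge 1/m$), so both arguments lie in the interval where $h_m$ is increasing and $H(p_{\cdot jk})\le h_m(P_e)\le h_m(t_{jk})$ is legitimate; and, per the paper's convention, sums should skip atoms of measure zero so that the conditional probabilities are defined.
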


Note that the measure-theoretic entropy $h_\mu(\varphi)$ of a measure-preserving system $(\mathrm{X},\varphi)$ is defined as the supremum over all $h_\mu(\varphi,\alpha)$ ranging over all finite $\Sigma$-measurable partitions $\alpha$. Thus, from the theorem we observe that whenever the measure-preserving system $(\mathrm{X},\varphi)$ has non-zero measure-theoretic entropy, then care has to be taken while choosing $\alpha$. Note that $h_\mu(\varphi, \alpha) = 0$ if and only if $\alpha$ is measurable with respect to the Pinsker-$\sigma$-algebra of $\varphi$ (see~\cite[Section~18.1]{Glasner-ErgodicTheoryViaJoinings-2003}).

\subsection{Lower bounds by approximation entropy}

Recall that Theorem~\ref{thm:exponential_lower_bound_dimension} applies to finite-dimensional subspaces of the form $\mathrm{L}^2(\mathrm{X} \mid \alpha)$ for a finite $\Sigma$-measurable partition $\alpha$ of $X$.
It is natural to ask, whether it can be generalized to arbitrary finite-dimensional subspaces $F$ of $\mathrm{L}^2(\mathrm{X})$.
For this we consider the \emph{approximation entropy $h_{\mathrm{apr}}(T_\varphi,F)$ of $T_\varphi$ with respect to $F$} introduced by Voiculescu in~\cite[Section~7]{Voiculescu-DynamicalApproximationEntropies-1995}, which gives another quantification of the unpredictability of a measure-preserving system. For details on this notion and the proof of the following see Section~\ref{sec:lower-bounds-multiplicity-function}.

\begin{theorem}
    \label{thm:approximation-entropy-Koopman}
    Let $(\mathrm{X}, \varphi)$ be an invertible measure-preserving system, let $F \subseteq \mathrm{L}^2(\mathrm{X})$ be a finite-dimensional linear subspace and $\varepsilon > 0$. Then there exists $K_0 \in \bbN$ and $\delta > 0$ that satisfy the following: Whenever for a linear operator $\hat T \colon F \to F$  and some $K \geq K_0$ the assertion
    \begin{align*}
        \lVert \hat T^k f - T_\varphi^k f \rVert \leq \delta \lVert f \rVert
    \end{align*}
    holds for all $x \in F$ and all $k \in \{0, \dots, K - 1\}$, we conclude that
    \begin{align*}
        \dim(F) \geq K (h_{\mathrm{apr}}(T_\varphi, F) - \varepsilon).
    \end{align*}
\end{theorem}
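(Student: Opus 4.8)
The plan is to reduce the statement to the defining property of the approximation entropy $h_{\mathrm{apr}}(T_\varphi, F)$ introduced in Section~\ref{sec:lower-bounds-multiplicity-function}, exploiting that invertibility makes $T_\varphi$ a \emph{unitary} operator on $\mathrm{L}^2(\mathrm{X})$. The starting observation is that the hypothesis on $\hat T$ is really a statement about the orthogonal projection $P_F$. Indeed, since $\hat T$ maps $F$ into itself, every vector $\hat T^k f$ lies in $F$, and because $P_F T_\varphi^k f$ is the best approximation of $T_\varphi^k f$ within $F$ we obtain
\[
  \lVert P_F T_\varphi^k f - T_\varphi^k f \rVert \leq \lVert \hat T^k f - T_\varphi^k f \rVert \leq \delta \lVert f \rVert
\]
for all $f \in F$ and $k \in \{0,\dots,K-1\}$. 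Hence $P_F$ is an orthogonal projection of rank $\dim(F)$ that $\delta$-approximately fixes the whole finite-time orbit $\{ T_\varphi^k f : 0 \le k < K,\ \lVert f \rVert \le 1 \}$ of the unit ball of $F$.

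First I would record the combinatorial quantity underlying the approximation entropy: for $\delta > 0$ and $n \in \bbN$ let $\rho_n(\delta)$ denote the least rank of a finite-rank positive contraction that $\delta$-approximates the orbit above up to time $n$. Projecting onto the range of such a contraction shows that one may equivalently minimize over orthogonal projections without changing $\rho_n(\delta)$, so $P_F$ is an admissible competitor and the previous paragraph gives $\rho_K(\delta) \le \dim(F)$. Reconciling this $\rho_n$ with the definition of $h_{\mathrm{apr}}(T_\varphi, F)$ from Section~\ref{sec:lower-bounds-multiplicity-function} (a spanning set or the unit ball of $F$ as the observable set, and the constant relating the tolerance for a basis to that for the ball) is the first point that needs care.

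The heart of the argument is that $n \mapsto \rho_n(\delta)$ is subadditive. Writing $\Omega_n$ for the time-$n$ orbit, one has $\Omega_{m+n} = \Omega_m \cup T_\varphi^m \Omega_n$. Given near-optimal projections $P$ for $\Omega_m$ and $Q$ for $\Omega_n$, unitarity of $T_\varphi$ makes $T_\varphi^m Q T_\varphi^{-m}$ a projection of the \emph{same} rank that $\delta$-approximates $T_\varphi^m \Omega_n$, and the projection onto $\operatorname{range}(P) + \operatorname{range}(T_\varphi^m Q T_\varphi^{-m})$ then $\delta$-approximates $\Omega_{m+n}$ with rank at most $\rank(P) + \rank(Q)$. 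This yields $\rho_{m+n}(\delta) \le \rho_m(\delta) + \rho_n(\delta)$, so by Fekete's lemma $\rho_n(\delta)/n$ converges to $\inf_n \rho_n(\delta)/n =: h_{\mathrm{apr}}(T_\varphi, F; \delta)$ and, crucially, $\rho_K(\delta) \ge K\, h_{\mathrm{apr}}(T_\varphi, F; \delta)$ holds for \emph{every} $K$. It is exactly here that invertibility is indispensable: without it $T_\varphi$ is only an isometry and the conjugation step breaks down.

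Finally, $h_{\mathrm{apr}}(T_\varphi, F; \delta)$ increases as $\delta \downarrow 0$ to $h_{\mathrm{apr}}(T_\varphi, F)$, so I would fix $\delta > 0$ small enough that $h_{\mathrm{apr}}(T_\varphi, F; \delta) \ge h_{\mathrm{apr}}(T_\varphi, F) - \varepsilon$ and set $K_0 = 1$. Combining the two inequalities gives, for every $K \ge K_0$,
\[
  \dim(F) \ge \rho_K(\delta) \ge K\, h_{\mathrm{apr}}(T_\varphi, F; \delta) \ge K\bigl( h_{\mathrm{apr}}(T_\varphi, F) - \varepsilon \bigr),
\]
as claimed. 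The main obstacle I anticipate is bookkeeping rather than conceptual: matching my $\rho_n$ to the precise definition used in Section~\ref{sec:lower-bounds-multiplicity-function} — in particular whether the orbit is taken one-sided (as the finite-time prediction setting suggests) or two-sided, and how the tolerance $\delta$ transforms between an orthonormal basis and the unit ball of $F$ — and verifying that the limit in $\delta$ behaves as stated so that a single $\delta$ can be chosen uniformly in $K$.
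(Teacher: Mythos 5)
Your proposal is correct, and it reaches the conclusion by a route that differs from the paper's in two substantive ways. The paper's proof (of the more general Theorem~\ref{thm:linear_lower_bound_dimension}, applied with $n=0$) shares your first step verbatim: the best-approximation property of $P_F$ (Lemma~\ref{lem:control}) converts the hypothesis on $\hat T$ into the statement that $F$ itself $\delta$-approximates the finite-time orbit, hence bounds the relevant $H_{\mathrm{apr}}$ from above by $\dim(F)$. From there the paper stays entirely inside Voiculescu's finite-set framework: it fixes an orthonormal basis $\omega$ of $F$, cites Proposition~\ref{prop:generator-entropy} (i.e.\ Voiculescu's generator result) for $h_{\mathrm{apr}}(T_\varphi,F)=h_{\mathrm{apr}}(T_\varphi,\omega)$, cites Voiculescu for the existence of the limit defining $h_{\mathrm{apr}}(T_\varphi,\omega,\delta)$, and then chooses $K_0$ large enough that $\frac{1}{K}H_{\mathrm{apr}}\bigl(\bigcup_{k=0}^{K-1}T_\varphi^k\omega,\delta\bigr)$ is within $\varepsilon/2$ of that limit. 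You instead rebuild the subadditivity machinery from scratch for a unit-ball quantity $\rho_n(\delta)$ and exploit the stronger half of Fekete's lemma, $\lim=\inf$, so that $\rho_K(\delta)\geq K\,h_{\mathrm{apr}}(T_\varphi,F;\delta)$ holds for \emph{every} $K$; this buys a genuine sharpening ($K_0=1$, so only $\delta$ depends on $\varepsilon$ and $F$) which the paper's own subadditive sequence would also have admitted but which the paper does not exploit. The bookkeeping you flag is indeed only bookkeeping, and only the easy direction is needed: any finite $\omega$ in the unit ball of $F$ has its orbit contained in your $\Omega_K$, so $h_{\mathrm{apr}}(T_\varphi,\omega,\delta)\leq h_{\mathrm{apr}}(T_\varphi,F;\delta)$, and scaling invariance of $H_{\mathrm{apr}}$ (or simply Proposition~\ref{prop:generator-entropy}) then gives $\sup_{\delta>0}h_{\mathrm{apr}}(T_\varphi,F;\delta)\geq h_{\mathrm{apr}}(T_\varphi,F)$, which is all your final chain requires. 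One small quibble: your claim that invertibility is \emph{indispensable} for the conjugation step is overstated, since for a mere isometry one can replace $T^mQT^{-m}$ by the projection onto $T^m(\operatorname{range}Q)$ (or conjugate by the adjoint, using $(T^m)^*T^m=\id$) and subadditivity survives; the real role of invertibility is that it makes $T_\varphi$ unitary, which is the setting in which the paper (following Voiculescu, with two-sided generators) defines $h_{\mathrm{apr}}$ at all.
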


We will present in Theorem~\ref{thm:hApproxInfinity} that whenever $\varphi$ has non-zero measure-theoretic entropy (and in Proposition~\ref{prop:contLebesgucomp-implies-approxEntInfty} that if $T_\varphi$ has a countable Lebesgue component), then there exist finite-dimensional subspaces $F$ of $\mathrm{L}^2(\mathrm{X})$, which allow for arbitrary large values of $h_{\mathrm{apr}}(T_\varphi, F)$. Thus, as above, care has to be taken by choosing $F$ in this context.

Let $F \subseteq \mathrm{L}^2(\mathrm{X})$ be a finite-dimensional subspace and $n \in \bbN$. The \emph{$n$-th delay subspace} for $(\mathrm{X}, \varphi)$ is given by
\begin{align*}
    F_n \coloneqq \linSpan \bigcup_{k = 0}^n T_\varphi F.
\end{align*}
Note that its dimension is upper bounded by $n \cdot \dim(F)$.
We will see in Section~\ref{sec:lower-bounds-multiplicity-function} that we have $h_{\mathrm{apr}}(T_\varphi, F_n)=h_{\mathrm{apr}}(T_\varphi, F)$. It is thus natural to ask for a stronger statement of Theorem~\ref{thm:approximation-entropy-Koopman} by replacing $F$ with $F_n$. This is possible as we will see in Section~\ref{sec:lower-bounds-multiplicity-function}. The following theorem is a special case of Theorem~\ref{thm:linear_lower_bound_dimension}.

\begin{theorem}
    \label{thm:delay-observables-Koopman}
    Let $(\mathrm{X}, \varphi)$ be an invertible measure-preserving system, let $F \subseteq \mathrm{L}^2(\mathrm{X})$ be a finite-dimensional linear subspace and $\varepsilon > 0$. Then there exists $K_0 \in \bbN$ and $\delta > 0$ that satisfy the following: Whenever for some $n \in \bbN$, a linear operator $\hat T_n \colon F_n \to F_n$ and $K \geq K_0$ the assertion
    \begin{align*}
        \lVert \hat T^k_n f - T_\varphi^k f \rVert \leq \delta \lVert f \rVert
    \end{align*}
    holds for all $x \in F_n$ and all $k \in \{0, \dots, K - 1\}$, we conclude that
    \begin{align*}
        \dim(F_n) \geq K (h_{\mathrm{apr}}(T_\varphi, F) - \varepsilon).
    \end{align*}
\end{theorem}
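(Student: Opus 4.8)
The plan is to revisit the mechanism that underlies Theorem~\ref{thm:approximation-entropy-Koopman} and to exploit a feature of the delay construction that the naive argument ignores: because $F \subseteq F_n$, any good approximant of $T_\varphi$ on $F_n$ is automatically a good approximant of $T_\varphi$ on the \emph{fixed} space $F$. This lets the entire quantitative budget be extracted from the approximation data of $F$ alone, which is precisely what produces constants $K_0$ and $\delta$ that do not depend on $n$.

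First I would reduce the hypothesis to a statement about orthogonal projections. Since $\hat T_n$ maps $F_n$ into itself and $F \subseteq F_n$, for every $f \in F$ and every $k \in \{0, \dots, K-1\}$ the vector $\hat T_n^k f$ lies in $F_n$. Writing $P_{F_n}$ for the orthogonal projection onto $F_n$ and using that the Koopman operator of a measure-preserving system is an isometry (so $\lVert T_\varphi^k f \rVert = \lVert f \rVert$), the best-approximation property of $P_{F_n}$ yields
\begin{align*}
    \lVert P_{F_n} T_\varphi^k f - T_\varphi^k f \rVert = \dist(T_\varphi^k f, F_n) \leq \lVert T_\varphi^k f - \hat T_n^k f \rVert \leq \delta \lVert f \rVert = \delta \lVert T_\varphi^k f \rVert
\end{align*}
for all $f \in F$ and all $k \in \{0, \dots, K-1\}$. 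Thus $P_{F_n}$ is a finite-rank orthogonal projection of rank $\dim(F_n)$ that $\delta$-approximately fixes every unit vector in the orbit $\bigcup_{k=0}^{K-1} T_\varphi^k F$ of the small space $F$.

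The crucial observation is that this last statement refers only to iterates of the \emph{fixed} space $F$, never to $F_n$. Hence $P_{F_n}$ is an admissible competitor in the rank minimization attached to $F$, namely the minimal rank of an approximant (an orthogonal projection, or a positive contraction, depending on the precise formulation of the approximation entropy) that $\delta$-approximately fixes the iterates $T_\varphi^k F$ for $k < K$. Denoting that minimal rank by $r_\delta(F, K)$, we obtain $\dim(F_n) \geq r_\delta(F, K)$, and then the same estimate relating $r_\delta(F, K)$ to $K\,(h_{\mathrm{apr}}(T_\varphi, F) - \varepsilon)$ that drives the proof of Theorem~\ref{thm:approximation-entropy-Koopman} applies unchanged, now with $\dim(F)$ replaced by $\dim(F_n)$. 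Because $r_\delta(F, K)$ and the admissible choices of $\delta$ and $K_0$ depend only on $F$ and $\varepsilon$, the resulting bound $\dim(F_n) \geq K\,(h_{\mathrm{apr}}(T_\varphi, F) - \varepsilon)$ holds with constants uniform in $n$.

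The main obstacle, and the reason a separate statement is warranted, is exactly this uniformity. The naive route of applying Theorem~\ref{thm:approximation-entropy-Koopman} directly to $F_n$ and invoking $h_{\mathrm{apr}}(T_\varphi, F_n) = h_{\mathrm{apr}}(T_\varphi, F)$ produces the same inequality, but with $K_0$ and $\delta$ depending on $F_n$ and therefore on $n$, which the theorem does not permit; overcoming this is the content of the observation above, that the witness $P_{F_n}$ for $F_n$ is simultaneously a witness for $F$. Alongside this, I would need to check the compatibility of the projection-based competitor with the exact form of the approximation entropy used (that an orthogonal projection is a legitimate approximant, so that the inequality points in the needed direction), and that the isometry identity $\lVert T_\varphi^k f \rVert = \lVert f \rVert$ together with invertibility of $\varphi$ matches the forward-orbit convention in the definition of $h_{\mathrm{apr}}$.
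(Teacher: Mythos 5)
Your proposal is correct and follows essentially the same route as the paper: the paper's proof of Theorem~\ref{thm:linear_lower_bound_dimension} likewise restricts the hypothesis to an orthonormal basis $\omega$ of the fixed space $F$, uses the best-approximation property of $P_{F_n}$ (Lemma~\ref{lem:control}) to exhibit $F_n$ as a competitor subspace in the minimization defining $H_{\mathrm{apr}}\bigl(\bigcup_{k=0}^{K-1} T^k \omega, \delta\bigr)$, and chooses $\delta$ and $K_0$ from $h_{\mathrm{apr}}(T,\omega)$ alone, which is exactly your uniformity-in-$n$ observation. Your ``rank minimization'' quantity $r_\delta(F,K)$ is precisely the paper's $H_{\mathrm{apr}}\bigl(\bigcup_{k=0}^{K-1} T^k \omega, \delta\bigr)$, modulo the immaterial replacement of a finite set $\omega$ by the unit sphere of $F$.
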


\subsection{Structure of the article}

After a short introduction to measure-theoretic entropy $h_\mu$ and conditional expectations in the Subsections~\ref{subsec:Measure-theoretic-entropy} and~\ref{subsec:conditionalExpectations} we provide the proof of Theorem~\ref{thm:approximation-entropy-Koopman} in Subsection~\ref{subsec:proofOfTheoremMeasureTheoreticalEntropyLowerBound}.
In Section~\ref{sec:lower-bounds-multiplicity-function} we define the approximation entropy $h_{\mathrm{apr}}$ and present a Hilbert space version of Theorem~\ref{thm:delay-observables-Koopman}, from which Theorem~\ref{thm:approximation-entropy-Koopman} and Theorem~\ref{thm:delay-observables-Koopman} can easily be deduced.
We continue the discussion of the approximation entropy in Section~\ref{sec:interplayMeasureTheoreticalAndApproximationEntropy} by relating it to the measure-theoretic entropy via the notion of countable Lebesgue components.
In Section~\ref{sec:theSpectralPerspective} we present some details on the relationship between the approximation entropy and the spectral theory and provide references for further reading.

\section{Lower bounds by measure-theoretic entropy}
\label{sec:finite-time-prediction-measure-entropy}

\subsection{Measure-theoretic entropy}
\label{subsec:Measure-theoretic-entropy}

See~\cite{Walters-IntroductionToErgodicTheory-1982} for an excellent exposition of measure-theoretical entropy and its intuition.
Let $\mathrm{X} = (X, \Sigma, \mu)$ be a probability space.
A partition $\beta$ of $X$ into $\Sigma$-measurable subsets is called a \emph{$\Sigma$-measurable} partition.
In the following we use the convention that sums over $\beta$ avoid sets with $\mu$-measure zero.
We define the \emph{static entropy of $\alpha$} by
\begin{align*}
    H_\mu(\alpha)\coloneqq \sum_{A \in \alpha} - \mu(A) \log(\mu(A)).
\end{align*}
Note that the static entropy of $\alpha$ can be intuitively understood as a quantification of the information provided by thinking of $\alpha$ as a resolution.
For two finite $\Sigma$-measurable partitions $\alpha$ and $\beta$ of $X$, we set
\begin{align*}
    \alpha \vee \beta \coloneqq \{ A \cap B : A \in \alpha, \, B \in \beta \}
\end{align*}
to be the \emph{common refinement} of $\alpha$ and $\beta$.
Note that the common refinement is associative and hence allows to define the common refinement $\bigvee_{k=1}^n \alpha_n$ also for finite families $(\alpha_k)_{k=1}^n$ of finite $\Sigma$-measurable partitions.
We define the \emph{conditional static entropy of $\alpha$ with respect to $\beta$} by $H_\mu(\alpha \mid \beta) \coloneqq H_\mu(\alpha \vee \beta) - H_\mu(\beta)$. The intuition behind this notion is to quantify the new information gained by learning $\alpha$, after having known $\beta$ already.

Now consider a measure-preserving dynamical system $(\mathrm{X}, \varphi)$.
For a finite \linebreak $\Sigma$-measurable partition $\alpha$ we denote $\varphi^{-k}(\alpha):=\{\varphi^{-k}(A);\, A\in \alpha\}$ for $k\in \mathbb{N}$.
We define the \emph{measure-theoretic entropy of $\alpha$ with respect to $\varphi$} by
\begin{align*}
    h_\mu(\varphi, \alpha) \coloneqq \lim_{n \to \infty} \frac{1}{n} H_\mu \left( \bigvee_{k = 0}^{n-1} \varphi^{-k}(\alpha) \right).
\end{align*}
Note that the existence of the limit can be shown with a standard subadditivity argument involving Fekete's lemma~\cite[Corollary 4.9.1]{Walters-IntroductionToErgodicTheory-1982}.
Taking a supremum over all finite $\Sigma$-measurable partitions $\alpha$ of $X$ we define the \emph{measure-theoretic entropy of $\varphi$} by
\begin{align*}
    h_\mu(\varphi) \coloneqq \sup_\alpha h_\mu(\varphi, \alpha).
\end{align*}

\subsection{Projections and conditional expectations}
\label{subsec:conditionalExpectations}

A linear operator $T \colon \mathrm{L}^2(\mathrm{X}) \to \mathrm{L}^2(\mathrm{X})$  is called \emph{Markov operator} if $Tf \geq 0$ for all $0 \leq f \in \mathrm{L}^2(\mathrm{X})$ and $T \one = \one$. A projection $P: \mathrm{L}^2(\mathrm{X}) \to \mathrm{L}^2(\mathrm{X})$ that is a Markov operator is called \emph{Markov projection}. For further details on Markov operators see~\cite[Chapter~13]{EisnerFarkasHaaseNagel-OperatorTheoreticAspectsErgodicTheory-2015}.

Consider a finite $\Sigma$-measurable partition $\alpha$ of a probability space $\mathrm{X} = (X,\Sigma,\mu)$.
We denote
\begin{align*}
    \mathrm{L}^2(\mathrm{X} \mid \alpha) \coloneqq \linSpan \{ \one_A : A \in \alpha \}.
\end{align*}
Note that this space consists precisely of the measurable functions with respect to the finite $\sigma$-algebra generated by $\alpha$. Furthermore, it forms a unital sublattice of $\mathrm{L}^2(\mathrm{X})$. For details see~\cite[Proposition~III.11.2]{Schaefer-BanachLatticesPositiveOperators-1974} or~\cite[Proposition~13.19~b)]{EisnerFarkasHaaseNagel-OperatorTheoreticAspectsErgodicTheory-2015}.
A linear operator $P \colon \mathrm{L}^2(\mathrm{X}) \to  \mathrm{L}^2(\mathrm{X})$ is called the \emph{conditional expectation} of $\alpha$, whenever it satisfies
\begin{align*}
    \int_A Pf \, \mathrm{d} \mu = \int_A f \, \mathrm{d} \mu
\end{align*}
for all $A \in \alpha$ and all $f \in \mathrm{L}^2(\mathrm{X})$.
This operator is uniquely defined and we denote $\bbE[f \mid \alpha] \coloneqq Pf$. Note that for $f \in \mathrm{L}^2(\mathrm{X})$
\begin{align*}
    \bbE[f \mid \alpha] = \sum_{A \in \alpha} \frac{1}{\mu(A)} \int_{A} f \, \mathrm{d} \mu \cdot \one_{A}.
\end{align*}
For further details on conditional expectations we recommend~\cite[Section~34]{Billingsley-ProbabilityAndMeasure-1995}.
For the proof of the following see~\cite[Lemma~13.16, Remark~13.21]{EisnerFarkasHaaseNagel-OperatorTheoreticAspectsErgodicTheory-2015}.

\begin{remark}
    \label{rem:projections-equivalence}
    Let $\alpha$ be a $\Sigma$-measurable partition of $X$ and let $P \colon \mathrm{L}^2(\mathrm{X}) \to \mathrm{L}^2(\mathrm{X})$ be a bounded linear operator. Then the following statements are equivalent.
    \begin{enumerate}[label = (\roman*)]
        \item\label{itm:rem:projections-equivalence:orthogonal}
        $P$ is the orthogonal projection onto $\mathrm{L}^2(\mathrm{X} \mid \alpha)$.
        \item\label{itm:rem:projections-equivalence:Markov}
        $P$ is a \emph{Markov projection} onto $\mathrm{L}^2(\mathrm{X} \mid \alpha)$.

        \item\label{itm:rem:projections-equivalence:conditional}
        $P$ is the \emph{conditional expectation} $\bbE[\argument \mid \alpha]$.
    \end{enumerate}
\end{remark}

\begin{remark}[Markov structure]
    Recall from the introduction that the optimal solution to the objective in~\eqref{eq:objective-DMD} is given by $\hat T_\varphi = P_F T_\varphi \restricted{F}$, where $T_\varphi$ denotes the Koopman operator on $\mathrm{L}^2(\mathrm{X})$ and $P$ is the orthogonal projection onto $\mathrm{L}^2(\mathrm{X} \mid \alpha)$. Since $P$ and $T_\varphi$ are both Markov operators their composition $\hat T_\varphi$ is as well.
\end{remark}

\subsection{Proof of Theorem~\ref{thm:exponential_lower_bound_dimension}}
\label{subsec:proofOfTheoremMeasureTheoreticalEntropyLowerBound}

In this section we will provide the proof of Theorem~\ref{thm:exponential_lower_bound_dimension} and prepare some arguments for later use.

\begin{lemma}
    \label{lem:partitionInequalityI}
    For any finite $\Sigma$-measurable partition $\beta$ of $X$ and any measurable $A \subseteq X$ we have
    \begin{align*}
        {\lVert \bbE [ \one_A \mid \beta ]- \one_A \rVert}_1
         & = 2 \sum_{B \in \beta}  \frac{\mu(A\cap B) \mu(B\setminus A)}{\mu(B)} .
    \end{align*}
\end{lemma}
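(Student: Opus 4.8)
The plan is to compute the $\mathrm{L}^1$-norm directly by decomposing the underlying space along the partition $\beta$. Since $\beta$ partitions $X$, I can write $\norm{\bbE[\one_A \mid \beta] - \one_A}_1 = \sum_{B \in \beta} \int_B \abs{\bbE[\one_A \mid \beta] - \one_A} \dx \mu$, splitting the integral over each block $B$. On each such block the conditional expectation is constant: by the explicit formula recalled in the excerpt, $\bbE[\one_A \mid \beta]$ equals $\mu(A \cap B)/\mu(B)$ on $B$. Denote this constant by $c_B \coloneqq \mu(A \cap B)/\mu(B) \in [0,1]$.

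The key step is to evaluate $\int_B \abs{c_B - \one_A} \dx \mu$ for a single block $B$. The integrand depends only on whether a point lies in $A$ or not, so I would split $B$ into $A \cap B$ and $B \setminus A$. On $A \cap B$ the indicator $\one_A$ equals $1$, contributing $\abs{c_B - 1} \cdot \mu(A \cap B) = (1 - c_B)\mu(A \cap B)$, using $c_B \le 1$. On $B \setminus A$ the indicator vanishes, contributing $c_B \cdot \mu(B \setminus A)$. Substituting $c_B = \mu(A \cap B)/\mu(B)$ and using $\mu(A \cap B) = \mu(B) - \mu(B \setminus A)$ in the first term, both contributions simplify to $\mu(A \cap B)\mu(B \setminus A)/\mu(B)$, so the per-block integral equals $2\mu(A \cap B)\mu(B \setminus A)/\mu(B)$.

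Summing over all blocks $B \in \beta$ then yields the claimed identity $\norm{\bbE[\one_A \mid \beta] - \one_A}_1 = 2\sum_{B \in \beta} \mu(A \cap B)\mu(B \setminus A)/\mu(B)$. Throughout I should respect the stated convention that sums over $\beta$ omit blocks of measure zero, which also ensures the divisions by $\mu(B)$ are well-defined. There is no real obstacle here: the argument is an elementary and routine computation, the only point requiring mild care being the symmetric algebraic simplification showing that the two contributions on $A \cap B$ and $B \setminus A$ coincide, which is what produces the factor of $2$.
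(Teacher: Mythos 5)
Your proof is correct and follows essentially the same route as the paper: both decompose along the blocks of $\beta$, use that $\bbE[\one_A \mid \beta]$ is constant equal to $\mu(A\cap B)/\mu(B)$ on each $B$, split $B$ into $A\cap B$ and $B\setminus A$, and observe that the two contributions each simplify to $\mu(A\cap B)\mu(B\setminus A)/\mu(B)$, producing the factor of $2$. The only cosmetic difference is that the paper first writes the pointwise identity for $\bbE[\one_A\mid\beta]-\one_A$ as a sum of indicators and then takes the absolute value, whereas you integrate block by block; the content is identical.
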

\begin{proof}
    We compute
    \begin{align*}
        \bbE [ \one_A \mid \beta ]- \one_A
         & =  \sum_{B \in \beta} \frac{\mu(A \cap B)}{\mu(B)}  \one_{B} - \sum_{B \in \beta} \one_{A} \cdot \one_B                                            \\
         & = \sum_{B \in \beta} \left( \left(\frac{\mu(A \cap B)}{\mu(B)}-1\right)  \one_{A\cap B} +  \frac{\mu(A\cap B)}{\mu(B)} \one_{B\setminus A}\right).
    \end{align*}
    Thus,
    \begin{align*}
        \left\lvert \bbE [ \one_A \mid \beta ]- \one_A \right\rvert
         & = \sum_{B \in \beta} \left( \left( 1 - \frac{\mu(A \cap B)}{\mu(B)}\right)  \one_{A\cap B} +  \frac{\mu(A\cap B)}{\mu(B)} \one_{B\setminus A}\right) \\
         & = \sum_{B \in \beta} \left(\frac{\mu(B\setminus A)}{\mu(B)} \one_{A\cap B} +  \frac{\mu(A\cap B)}{\mu(B)} \one_{B\setminus A}\right)
    \end{align*}
    and we observe that
    \begin{align*}
        {\lVert \bbE [ \one_A \mid \beta ]- \one_A \rVert}_1
         & = 2 \sum_{B \in \beta}  \frac{\mu(A\cap B) \mu(B\setminus A)}{\mu(B)} .
    \end{align*}
\end{proof}

For the following lemma we adapt and extend an argument given in~\cite[Lemma 4.15]{Walters-IntroductionToErgodicTheory-1982} to our context.

\begin{lemma}
    \label{lem:continuity-conditional-expectation}
    Let $\kappa\in \mathbb{N}$,
    $p \in [1, \infty)$ and
    $\varepsilon > 0$.
    Then there exists $\delta > 0$, such that
    for any finite $\Sigma$-measurable partitions $\alpha$ and $\beta$ of $X$ with $|\alpha|\leq \kappa$ and $\sup_{A\in \alpha} {\lVert \bbE[\one_A \mid \beta ] - \one_A \rVert}_p < \delta$, we obtain that $H_\mu(\alpha \mid \beta) < \varepsilon$.
\end{lemma}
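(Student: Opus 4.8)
The plan is to reduce the conditional entropy to a weighted average of Shannon entropies of the conditional distributions and to exploit that the hypothesis forces these distributions to be close to point masses. First I would pass from $L^p$ to $L^1$: since $\mu$ is a probability measure and $p \geq 1$, Jensen's inequality gives $\norm{\cdot}_1 \leq \norm{\cdot}_p$, so the hypothesis yields $\norm{\bbE[\one_A \mid \beta] - \one_A}_1 < \delta$ for every $A \in \alpha$. Writing $p_{AB} := \mu(A \cap B)/\mu(B)$ for the conditional probabilities (with $B$ ranging only over elements of $\beta$ of positive measure), Lemma~\ref{lem:partitionInequalityI} rewrites this as $\sum_{B \in \beta} \mu(B)\, p_{AB}(1 - p_{AB}) < \delta/2$ for each $A \in \alpha$. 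Summing over the at most $\kappa$ elements of $\alpha$ and setting $\phi(q) := \sum_i q_i(1 - q_i)$ for a probability vector $q$, this becomes $\sum_{B \in \beta} \mu(B)\, \phi(q^B) < \kappa\delta/2$, where $q^B = (p_{AB})_{A \in \alpha}$ denotes the conditional distribution on $\alpha$ given $B$.

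Next I would record the standard identity $H_\mu(\alpha \mid \beta) = \sum_{B \in \beta} \mu(B)\, H(q^B)$, where $H(q) = -\sum_i q_i \log q_i$; this follows by directly expanding $H_\mu(\alpha \vee \beta) - H_\mu(\beta)$ and using $\sum_{A \in \alpha}\mu(A \cap B) = \mu(B)$. The task is thus to bound $\sum_B \mu(B)\, H(q^B)$ given control on $\sum_B \mu(B)\, \phi(q^B)$. The elementary comparison I rely on is that on the compact simplex $\Delta_\kappa = \{q \in [0,1]^\kappa : \sum_i q_i = 1\}$ both $\phi$ and $H$ are continuous and vanish exactly at the vertices; a routine compactness argument then shows that for every $\eta > 0$ there is $c > 0$ with $\phi(q) < c \Rightarrow H(q) < \eta$ (padding each $q^B$ by zero coordinates to land in $\Delta_\kappa$, which alters neither $\phi$ nor $H$).

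Finally I would split $\beta$ into the \emph{good} sets $\{B : \phi(q^B) < c\}$ and the \emph{bad} sets $\{B : \phi(q^B) \geq c\}$. On the good sets $H(q^B) < \eta$, contributing at most $\eta$ to the weighted average. On the bad sets I bound $H(q^B) \leq \log\kappa$ (the maximal entropy on at most $\kappa$ atoms) and control their total mass by Markov's inequality, $\sum_{B \text{ bad}} \mu(B) \leq \tfrac{1}{c}\sum_B \mu(B)\,\phi(q^B) < \tfrac{\kappa\delta}{2c}$, so their contribution is at most $\tfrac{\kappa\delta \log\kappa}{2c}$. Choosing $\eta = \varepsilon/2$, fixing the corresponding $c$, and then taking $\delta$ small enough that $\tfrac{\kappa\delta\log\kappa}{2c} < \varepsilon/2$ gives $H_\mu(\alpha \mid \beta) < \varepsilon$, as desired, and the resulting $\delta$ depends only on $\varepsilon$, $p$ and $\kappa$.

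The one genuinely non-routine point is the comparison between $\phi$ and $H$: since $-q\log q$ does not vanish linearly as $q \to 0$, no pointwise estimate of the form $H \leq C\phi$ can hold, so I prefer the compactness argument above to sidestep computing an explicit modulus of continuity. The averaging over the arbitrary partition $\beta$ is then handled cleanly by the maximal-entropy bound $\log\kappa$ together with Markov's inequality, which prevents a small number of atoms $B$ with badly spread conditional distribution from spoiling the average.
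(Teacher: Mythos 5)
Your proof is correct, but it takes a genuinely different route from the paper's. The paper (adapting Walters' Lemma 4.15) also reduces to the $L^1$-norm via H\"older and invokes Lemma~\ref{lem:partitionInequalityI}, but then stays entirely within the calculus of partitions: for each $A\in\alpha$ it forms the ``misassigned'' set $C_A=\bigcup\{A\cap B : B\in\beta,\ \mu(A\cap B)/\mu(B)\le \tfrac12\}$, shows $\mu(C_A)<c\delta$, builds the coarse auxiliary partition $\gamma=\{C_A : A\in\alpha\}\cup\{C\}$ possessing one atom of mass at least $1-\kappa c\delta$, verifies the refinement $\alpha\vee\beta\subseteq\gamma\vee\beta$ (the key combinatorial point being that each $B\in\beta$ can be majority-assigned to at most one $A\in\alpha$), and concludes $H_\mu(\alpha\mid\beta)\le H_\mu(\gamma)<\varepsilon$ by subadditivity of static entropy. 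You instead use the disintegration identity $H_\mu(\alpha\mid\beta)=\sum_{B}\mu(B)H(q^B)$, compare the index-of-coincidence functional $\phi$ with Shannon entropy $H$ by a compactness argument on the simplex, and finish with Markov's inequality plus the maximal-entropy bound $\log\kappa$ on the bad atoms; all of these steps check out, including the quantifier order ($\eta$, then $c$, then $\delta$, all before $\alpha$ and $\beta$ appear) and the zero-padding, which indeed alters neither $\phi$ nor $H$. The trade-off: your argument is conceptually more transparent (small $\phi$ means the conditional distributions $q^B$ are near Dirac masses, and entropy is small exactly there) and avoids constructing $\gamma$ and verifying the refinement, but your $\delta$ is non-constructive since $c$ comes from compactness, whereas the paper's $\delta$ is essentially explicit (any $\delta\in(0,\tfrac{1}{4c\kappa})$ with $-\kappa c\delta\log(c\delta)-(1-\kappa c\delta)\log(1-\kappa c\delta)<\varepsilon$ works). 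Your side remark that no pointwise bound $H\le C\phi$ can hold is also accurate --- near a vertex $H$ behaves like $t\log(1/t)$ against $\phi\sim t$ --- so either compactness or an explicit modulus of the form $\phi\log(1/\phi)$ is genuinely needed at that point.
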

\begin{proof}
    Let $c \coloneqq {\lVert \one \rVert}_q$, where $q \in (1, \infty]$ is the Hölder conjugate of $p$. Choose $\delta \in \left(0, \frac{1}{4 c \kappa} \right)$ such that $- \kappa c \delta \log( c \delta ) - (1 - \kappa c \delta) \log(1 - \kappa c \delta) < \varepsilon$.

    Now consider finite $\Sigma$-measurable partitions $\alpha$ and $\beta$ of $X$ with $|\alpha|\leq \kappa$ and $\sup_{A\in \alpha} {\lVert \bbE[\one_A \mid \beta ] - \one_A \rVert}_p < \delta$. For $A \in \alpha$ we denote by $\beta_A$ the collection of sets $B \in \beta$ for which we have
    \begin{align*}
        \frac{\mu(B \cap A)}{\mu(B)} > \frac{1}{2}.
    \end{align*}
    Then $\beta \setminus \beta_A$ contains all the sets $B$ such that
    \begin{align*}
        \frac{\mu(B \setminus A)}{\mu(B)} = 1 - \frac{\mu(B \cap A)}{\mu(B)} \geq \frac{1}{2}.
    \end{align*}
    We thus observe for
    $C_A \coloneqq \bigcup_{B \in \beta \setminus \beta_A} (A \cap B)$ that
    \begin{align*}
        \mu \left(C_A\right)
         & \leq \sum_{B \in \beta_A} \mu(B \setminus A) + \sum_{B \in \beta \setminus \beta_A} \mu(A \cap B) \\
         & \leq 2 \sum_{B \in \beta} \frac{\mu(B \setminus A) \, \mu(A \cap B)}{\mu(B)}.
    \end{align*}
    Thus Lemma~\ref{lem:partitionInequalityI} and the Hölder inequality imply
    \begin{align}
        \label{eq:Cestimate}
        \mu \left(C_A \right)
         & \leq {\lVert \bbE [ \one_A \mid \beta ]- \one_A \rVert}_1
        \leq c{\lVert \bbE [ \one_A \mid \beta ]- \one_A \rVert}_p
        <c\delta
    \end{align}
    for all $A\in \alpha$.
    Define the index set $\Lambda \coloneqq  \{ (A,B) : B \in \beta_A, \, A \in \alpha \}$ and set $C \coloneqq \bigcup_{(A,B) \in \Lambda} A\cap B$. Then it is straightforward to observe that $\gamma \coloneqq \{C_A : A\in \alpha \} \cup \{C \}$ is a finite $\Sigma$-measurable partition of $X$.
    Notice that $\alpha \vee \beta \subseteq \gamma \vee \beta$. Indeed, if $A \in \alpha$ and $B \in \beta \setminus \beta_A$, then
    \begin{align*}
        A \cap B = \left( \bigcup_{\tilde B \in \beta \setminus \beta_A} (A \cap \tilde B) \right) \cap B\in \gamma \vee \beta.
    \end{align*}
    And if $A \in \alpha$ and $B \in \beta_A$, then
    \begin{align}
        \label{eq:helper-common-refinement-inclusion}
        \gamma \vee \beta \ni \left( \bigcup_{(\tilde A, \tilde B) \in \Lambda} (\tilde A \cap \tilde B) \right) \cap B = \bigcup \{ \tilde A \cap B : \tilde A \in \alpha \text{ such that } B \in \beta_{\tilde A} \}.
    \end{align}
    Notice that $B$ is only an element of $\beta_{\tilde A}$ for at most one $\tilde A \in \alpha$. Otherwise, if there are two $A_1, A_2 \in \alpha$ such that $B \in \beta_{A_1} \cap \beta_{A_2}$, then by disjointness of $\tilde A_1$ and $\tilde A_2$ we obtain
    \begin{align*}
        \frac{\mu(B \cap (\tilde A_1 \cup \tilde A_2)) }{\mu(B)} = \frac{\mu(B \cap \tilde A_1) }{\mu(B)} + \frac{\mu(B \cap \tilde A_2) }{\mu(B)} > \frac{1}{2} + \frac{1}{2} > 1.
    \end{align*}
    This is a contradiction. Continuing~\eqref{eq:helper-common-refinement-inclusion}, it then follows that
    \begin{align*}
        \gamma \vee \beta \ni \bigcup \{ \tilde A \cap B : \tilde A \in \alpha \text{ such that } B \in \beta_{\tilde A} \} = A \cap B.
    \end{align*}
    Hence, the inclusion $\alpha \vee \beta \subseteq \gamma \vee \beta$ holds.

    Now recall from~\eqref{eq:Cestimate} that $\mu(C_A) < c \delta$ for all $A \in \alpha$. This implies that $\mu(C) \geq 1 - \lvert \alpha \rvert c \delta \geq 1 - \kappa c \delta$ and we observe
    \begin{align*}
        H_\mu(\gamma)
         & \leq -\lvert \alpha \rvert c \delta \log(c \delta)-(1 - \kappa c \delta) \log(1 - \kappa c \delta) \\
         & \leq -\kappa c \delta \log(c \delta) - (1 - \kappa c \delta) \log(1 - \kappa c \delta)             \\
         & < \varepsilon.
    \end{align*}
    Thus, since $\alpha \vee \beta \subseteq \gamma \vee \beta$, we obtain from~\cite[Theorem~4.3]{Walters-IntroductionToErgodicTheory-1982} that
    \begin{align*}
        H_\mu(\beta) + H_\mu( \alpha \mid \beta)
        = H_\mu (\alpha \vee \beta) 
        = H_\mu (\gamma \vee \beta) 
        \leq H_\mu(\gamma) + H_\mu(\beta) < H_\mu(\beta) + \varepsilon, 
    \end{align*}
    which implies $H_\mu(\alpha \mid \beta) < \varepsilon$.
\end{proof}

One last simple but important observation is the following.

\begin{lemma}
    \label{lem:control}
    Let $T \colon \mathrm{L}^2(\mathrm{X}) \to \mathrm{L}^2(\mathrm{X})$ be bounded linear operator and denote by $F \subseteq \mathrm{L}^2(\mathrm{X})$ a finite-dimensional linear subspace. Let further $\hat T \colon F \to F$ be linear, $P_F$ be the orthogonal projection onto $F$ and $f \in F$ as well as $k \in \bbN$. If there exists $\delta > 0$ such that  ${\lVert \hat T^k f - T^k f \rVert}_2 \leq \delta {\lVert f \rVert}_2$, then also
    ${\lVert P_F T^k f - T^k f \rVert}_2 \leq \delta {\lVert f \rVert}_2$.
\end{lemma}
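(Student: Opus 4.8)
The plan is to use nothing beyond the extremal (best-approximation) property of the orthogonal projection; this is really a one-line argument, and the single observation that does all the work is a membership statement. Concretely, since $\hat T$ is a linear map from $F$ to $F$ and $f \in F$, repeated application never leaves $F$, so $\hat T^k f \in F$ for every $k \in \bbN$. This is the only point that requires any care, and it is precisely what lets $\hat T^k f$ serve as an admissible competitor in the minimization that characterizes $P_F T^k f$.

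With that in place, I would recall the defining property of the orthogonal projection $P_F$ onto the closed subspace $F$: for every $g \in \mathrm{L}^2(\mathrm{X})$, the element $P_F g$ is the closest point of $F$ to $g$, i.e.
\[
    \lVert P_F g - g \rVert_2 \leq \lVert h - g \rVert_2 \quad \text{for all } h \in F,
\]
which is exactly the inequality already recorded in the introduction (there for $T_\varphi$, but the statement holds verbatim for any target vector). Applying it with $g \coloneqq T^k f$ and $h \coloneqq \hat T^k f \in F$ yields
\[
    \lVert P_F T^k f - T^k f \rVert_2 \leq \lVert \hat T^k f - T^k f \rVert_2 \leq \delta \lVert f \rVert_2,
\]
where the final inequality is the hypothesis of the lemma. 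This is the desired conclusion.

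There is no genuine obstacle here: the content is simply that the DMD-type approximant $\hat T^k f$ is one particular element of $F$, so it can only do worse than the orthogonal projection $P_F T^k f$, which is the \emph{optimal} element of $F$ approximating $T^k f$. The role of the lemma in the sequel is to let us replace an arbitrary linear $\hat T \colon F \to F$ satisfying the trajectory bound by the canonical choice $P_F T \restricted{F}$, thereby reducing the later dimension estimates to statements about $P_F T^k \restricted{F}$ alone.
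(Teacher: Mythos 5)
Your proof is correct and is essentially identical to the paper's own argument: both use the best-approximation property of $P_F$ with the competitor $\hat T^k f \in F$ and then invoke the hypothesis. Nothing further to add.
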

\begin{proof}
    Since $P_F$ is an orthogonal projection onto $F$, the inequality
    \begin{align*}
        \lVert P_F T^k f - T^k f \rVert \leq \lVert g - T^k f \rVert
    \end{align*}
    holds for all $f, g \in F$. Hence, setting $g \coloneqq \hat T^k f$ we obtain
    \begin{align*}
        \lVert P_F T^k f - T^k f \rVert \leq \lVert \hat T^k f - T^k f \rVert \leq \delta \lVert f \rVert.
    \end{align*}
    Thus, the statement follows.
\end{proof}

We are now ready to prove the main result of this section, which was stated in the introduction.

\begin{proof}[Proof of Theorem~\ref{thm:exponential_lower_bound_dimension}]
    Let $\varepsilon > 0$ and $\alpha$ be the finite $\Sigma$-measurable partition of $X$. Let $F \coloneqq \mathrm{L}^2(\mathrm{X} \mid \alpha)$ and denote by $P_F$ the orthogonal projection onto $F$. It follows from Remark~\ref{rem:projections-equivalence} that $P_F$ coincides with the conditional expectation $\bbE[\argument \mid \alpha]$. So by Lemma~\ref{lem:continuity-conditional-expectation} there exists $\delta > 0$ such that whenever $\lVert P_F T_\varphi^k \one_A - T_\varphi^k \one_A \rVert < \delta$ for some $k \in \bbN$, we obtain that $H_\mu (\varphi^{-k}(\alpha) \mid \alpha) < \varepsilon/2$. Also notice that there exists $K_0 \in \bbN$ such that
    \begin{align*}
        \left\lvert h_\mu(\varphi, \alpha) - \frac{1}{K} H_\mu \left( \bigvee_{k = 0}^{K - 1} \varphi^{-k}(\alpha) \right) \right\rvert < \varepsilon/2
    \end{align*}
    for all $K \geq K_0$.

    Let $f \in F$ and fix $K \geq K_0$ and assume that ${\lVert \hat T^k f - T_\varphi^k f \rVert}_2 \leq \delta {\lVert f \rVert}_2$ holds for all $f \in F$ and all $k \in \{0, \dots, K - 1\}$. From Lemma~\ref{lem:control} we observe that
    $\lVert P_F T_\varphi^k f - T_\varphi^k f \rVert \leq \delta \lVert f \rVert$ for all $f \in F$ and all $k \in \{0, \dots, K - 1\}$. In particular, this implies that $\lVert P_F T_\varphi^k \one_A - T_\varphi^k \one_A \rVert \leq \delta$ holds for all $k \in \{0, \dots, K_0\}$ and all $A \in \alpha$. Hence, Lemma~\ref{lem:continuity-conditional-expectation} yields that
    \begin{align*}
        H_\mu(\varphi^{-k}(\alpha) \mid \alpha) \leq \varepsilon/2
    \end{align*}
    for all $k \in \{0, \dots, K - 1\}$. Thus,
    \begin{align*}
        H_\mu \left( \bigvee_{k = 0}^{K - 1} \varphi^{-k}(\alpha) \right) & \leq H_\mu \left( \alpha \vee \left( \bigvee_{k = 1}^{K-1} \varphi^{-k}(\alpha) \right) \right)              \\
                                                                          & \leq H_\mu(\alpha) + H_\mu \left( \bigvee_{k = 1}^{K - 1} \varphi^{-k}(\alpha) \, \middle| \, \alpha \right) \\
                                                                          & \leq H_\mu(\alpha) + \sum_{k = 1}^{K - 1} H_\mu (\varphi^{-k} (\alpha) \mid \alpha)                          \\
                                                                          & \leq \log(\lvert \alpha \rvert) + \tfrac{(K - 1) \varepsilon}{2}.
    \end{align*}
    Now dividing by $K$ yields
    \begin{align*}
        h_\mu(\varphi, \alpha) - \varepsilon \leq \frac{1}{K} H_\mu \left(\bigvee_{k = 0}^{K - 1} \varphi^{-k}(\alpha) \right) - \varepsilon/2 \leq \frac{1}{K} \log(\lvert \alpha \rvert),
    \end{align*}
    and the statement follows.
\end{proof}

\section{Lower bounds by approximation entropy}
\label{sec:lower-bounds-multiplicity-function}

In this section we provide a proof for Theorem~\ref{thm:delay-observables-Koopman}. In order to do this we show a more general statement about unitary operators on Hilbert spaces.
Let $H$ be a Hilbert space, denote by $\calF_{\mathrm{fin}}(H)$ the space of all finite-dimensional subspaces $F \subset H$ and by $\calP_{\mathrm{orth}}(H)$ the space of all orthogonal projections $P \colon H \to H$. Consider for $\delta > 0$ the quantity measuring the complexity of approximating a finite point set $\omega \subseteq H$ by a finite-dimensional subspace $F \subseteq H$ given by
\begin{align*}
    H_{\mathrm{apr}}(\omega, \delta)
    \coloneqq & \min \{ \dim F : F \in \calF_{\mathrm{fin}}(H), \, \forall x \in \omega: \exists y \in F : \lVert x - y \rVert < \delta \} \\
    =         & \min \{ \dim(PH): P \in \calP_{\mathrm{orth}}(H), \, \forall x \in \omega : \lVert Px - x \rVert < \delta \}.
\end{align*}
Notice that $H_{\mathrm{apr}}(\omega, \delta)$ is bounded from above by $\lvert \omega \rvert$.
Considering a unitary operator $T\colon H\to H$, a standard subadditivity argument yields the existence of the following limit. See for details~\cite[Proposition~7.13]{Voiculescu-DynamicalApproximationEntropies-1995}. We denote
\begin{align*}
    h_{\mathrm{apr}}(T, \omega, \delta) & \coloneqq \lim_{n \to \infty} \frac{1}{n} H_{\mathrm{apr}} \left( \bigcup_{k = 0}^{n - 1} T^k \omega, \delta \right)
\end{align*}
and define the \emph{approximation entropy of $\omega$ with respect to $T$} by
\begin{align*}
    h_{\mathrm{apr}}(T, \omega)\coloneqq\sup_{\delta>0}h_{\mathrm{apr}}(T, \omega, \delta).
\end{align*}
Note that $h_{\mathrm{apr}}(T,\omega)\leq \lvert \omega \rvert$.
For a (not necessarily $T$-invariant) closed linear subspace $F\subseteq H$ we define the \emph{approximation entropy of $F$ with respect to $T$} by
\begin{align}
    \label{eq:approximation-entropy}
    h_{\mathrm{apr}}(T,F) & \coloneqq \sup_{\substack{\omega \subseteq F \\ \text{finite}}}\, h_{\mathrm{apr}}(T, \omega).
\end{align}
The \emph{approximation entropy of $T$} is given by $h_{\mathrm{apr}}(T) \coloneqq h_{\mathrm{apr}}(T,H)$.
A subset $\omega\subseteq H$ is called a \emph{$T$-generator}, whenever $H$ is the closure of the subspace spanned by $\bigcup_{k\in \mathbb{Z}} T^k\omega$.
For more information on the approximation entropy we refer to~\cite[Section~7]{Voiculescu-DynamicalApproximationEntropies-1995} and the references therein. For reference of the following see~\cite[Proposition 7.4]{Voiculescu-DynamicalApproximationEntropies-1995}.

\begin{lemma}
    \label{lem:PropFourOneVoicu}
    If $(\omega_n)_{n \in \bbN}$ is an increasing sequence of finite subsets of $H$ such that $\bigcup_{n \in \bbN} \omega_n$ is a $T$-generator, then
    $h_{\mathrm{apr}}(T)= \sup_{n \in \bbN} h_{\mathrm{apr}}(T, \omega_n)$.
\end{lemma}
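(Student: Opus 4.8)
The plan is to prove the two inequalities separately, the trivial one being $\sup_{n}h_{\mathrm{apr}}(T,\omega_n)\le h_{\mathrm{apr}}(T)$: each $\omega_n$ is a finite subset of $H$, so by $h_{\mathrm{apr}}(T)=h_{\mathrm{apr}}(T,H)$ and the definition~\eqref{eq:approximation-entropy} of the approximation entropy of a subspace we get $h_{\mathrm{apr}}(T,\omega_n)\le h_{\mathrm{apr}}(T)$ for every $n$. For the reverse inequality it suffices to bound $h_{\mathrm{apr}}(T,\omega)\le S:=\sup_n h_{\mathrm{apr}}(T,\omega_n)$ for an arbitrary finite set $\omega=\{x_1,\dots,x_m\}\subseteq H$ and then take the supremum over $\omega$. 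First I would record three elementary facts. (i) $H_{\mathrm{apr}}(\argument,\delta)$ is monotone and subadditive under unions, with $H_{\mathrm{apr}}(S,\delta)\le\lvert S\rvert$ (take $F=\linSpan S$); consequently $h_{\mathrm{apr}}(T,\argument)$ is monotone. (ii) A perturbation estimate: if every point of $\omega$ lies within $\eta$ of some point of $\omega'$, then $H_{\mathrm{apr}}(\omega,\delta+\eta)\le H_{\mathrm{apr}}(\omega',\delta)$, obtained by keeping the subspace that $\delta$-approximates $\omega'$ and applying the triangle inequality. (iii) A telescoping invariance: writing $\omega_n^{(N)}:=\bigcup_{j=-N}^{N}T^j\omega_n$, one has $h_{\mathrm{apr}}(T,\omega_n^{(N)})=h_{\mathrm{apr}}(T,\omega_n)$, because $\bigcup_{k=0}^{m-1}T^k\omega_n^{(N)}=\bigcup_{l=-N}^{m+N-1}T^l\omega_n$ differs from $\bigcup_{l=0}^{m-1}T^l\omega_n$ by at most $2N\lvert\omega_n\rvert$ points, a contribution that vanishes after dividing by $m$ and letting $m\to\infty$ (using (i) and the subadditivity/cardinality bounds).

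Fix $\eta>0$. Since $\bigcup_n\omega_n$ is a $T$-generator, the space $\linSpan\bigcup_{k\in\bbZ}T^k\bigl(\bigcup_n\omega_n\bigr)$ is dense in $H$, so each $x_i$ has an approximant $x_i'$ with $\norm{x_i-x_i'}<\eta$ that is a finite linear combination of elements of $\omega_n^{(N)}$; since there are finitely many $x_i$ and the $\omega_n$ increase, a single pair $(n,N)$ works for all $i$. Set $\omega':=\{x_1',\dots,x_m'\}$ and let $C$ be the largest $\ell^1$-norm of the coefficient vectors expressing the $x_i'$ over $\omega_n^{(N)}$. Using that $T$ is unitary (so $\norm{T^k z}=\norm{z}$), if a subspace $\tfrac{\delta}{C}$-approximates $\bigcup_{k=0}^{m-1}T^k\omega_n^{(N)}$, then it $\delta$-approximates $\bigcup_{k=0}^{m-1}T^k\omega'$, since $\dist(T^k x_i',F)\le\sum_j\lvert c_{ij}\rvert\,\dist(T^k z_{ij},F)<\delta$. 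Hence $h_{\mathrm{apr}}(T,\omega',\delta)\le h_{\mathrm{apr}}(T,\omega_n^{(N)},\tfrac{\delta}{C})\le h_{\mathrm{apr}}(T,\omega_n^{(N)})$ for every $\delta>0$, and taking the supremum over $\delta$ together with (iii) gives $h_{\mathrm{apr}}(T,\omega')\le h_{\mathrm{apr}}(T,\omega_n^{(N)})=h_{\mathrm{apr}}(T,\omega_n)\le S$.

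It remains to pass from $\omega'$ back to $\omega$. Since $T$ is unitary, $\bigcup_{k=0}^{m-1}T^k\omega'$ still $\eta$-approximates $\bigcup_{k=0}^{m-1}T^k\omega$, so property (ii) yields $H_{\mathrm{apr}}(\bigcup_{k=0}^{m-1}T^k\omega,\delta+\eta)\le H_{\mathrm{apr}}(\bigcup_{k=0}^{m-1}T^k\omega',\delta)$; dividing by $m$ and letting $m\to\infty$ gives $h_{\mathrm{apr}}(T,\omega,\delta+\eta)\le h_{\mathrm{apr}}(T,\omega',\delta)\le S$. As $\delta,\eta>0$ are arbitrary and $\delta+\eta$ ranges over all of $(0,\infty)$ (e.g.\ choose $\delta=\eta=\rho/2$), this forces $h_{\mathrm{apr}}(T,\omega,\rho)\le S$ for every $\rho>0$, hence $h_{\mathrm{apr}}(T,\omega)=\sup_{\rho>0}h_{\mathrm{apr}}(T,\omega,\rho)\le S$. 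Taking the supremum over finite $\omega\subseteq H$ completes the proof.

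The main obstacle is the gap between approximating the generating vectors $\omega_n^{(N)}$ and approximating the linear combinations $\omega'$ built from them: a subspace that $\delta$-approximates the generators only $C\delta$-approximates their combinations, where $C$ is the coefficient sum. The key observation that makes the argument work is that $h_{\mathrm{apr}}(T,\argument)$ is defined through $\sup_{\delta>0}$, so the multiplicative loss $C$ disappears in the supremum; this is precisely why the definition uses a supremum over all thresholds rather than a single fixed one. The remaining care is bookkeeping: handling the two independent thresholds $\delta$ and $\eta$ simultaneously, and checking that the telescoping tails in (iii) are negligible in the Ces\`aro-type limit.
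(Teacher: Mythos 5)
Your argument is correct. Be aware, though, that the paper itself gives no proof of this lemma at all: it is quoted directly from Voiculescu (cited as [Proposition~7.4] there), so your proof is not paralleling an in-paper argument but rather filling in that citation with a self-contained one. What you wrote is a sound reconstruction of the standard argument: the inequality $\sup_n h_{\mathrm{apr}}(T,\omega_n)\le h_{\mathrm{apr}}(T)$ is immediate from $h_{\mathrm{apr}}(T)=h_{\mathrm{apr}}(T,H)$, and your converse correctly combines (a) monotonicity, subadditivity and the cardinality bound for $H_{\mathrm{apr}}(\argument,\delta)$, (b) the triangle-inequality perturbation estimate, and (c) the shift-invariance $h_{\mathrm{apr}}(T,\omega_n^{(N)})=h_{\mathrm{apr}}(T,\omega_n)$, whose error term $2N\lvert\omega_n\rvert$ indeed vanishes in the Ces\`aro limit. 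The decisive mechanism --- the multiplicative $\ell^1$-coefficient loss $C$ from replacing generating vectors by their linear combinations is absorbed because $h_{\mathrm{apr}}(T,\omega)$ is a supremum over all thresholds, while the additive loss $\eta$ from the density argument is killed by letting both parameters shrink --- is exactly what makes the generator property work in Voiculescu's framework, and you invoke unitarity only where it is genuinely needed (isometry of $T^k$ to transport the $\eta$-approximation along orbits, invertibility to form $\omega_n^{(N)}$). Two cosmetic points, neither affecting correctness: the symbol $m$ does double duty as $\lvert\omega\rvert$ and as the Ces\`aro index, and you implicitly use the existence of the limits defining $h_{\mathrm{apr}}(T,\argument,\delta)$, which the paper guarantees by subadditivity; your approach buys the paper self-containedness and makes visible that only isometry, invertibility, and elementary Hilbert-space estimates are needed.
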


As an immediate consequence of Lemma~\ref{lem:PropFourOneVoicu} we observe the following.

\begin{proposition}
    \label{prop:generator-entropy}
    For any finite-dimensional subspace $F \subseteq H$ and any finite subset $\omega\subseteq F$ with $F = \linSpan(\omega)$ we have
    \begin{align*}
        h_{\mathrm{apr}}(T, F) = h_{\mathrm{apr}}(T, \omega) = h_{\mathrm{apr}}(T\restricted{G})
        = h_{\mathrm{apr}}(T,G),
    \end{align*}
    where $G \coloneqq \overline{\linSpan} \big(\bigcup_{k\in \mathbb{Z}}T^kF \big)$.
    Note that in this context $\omega$ is a $T\restricted{G}$-generator. Moreover,
    \begin{align*}
        h_{\mathrm{apr}}(T) = \sup_F h_{\mathrm{apr}}(T, F),
    \end{align*}
    where the supremum is taken over all finite-dimensional subspaces $F$ of $H$.
\end{proposition}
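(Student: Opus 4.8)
The plan is to establish the chain of equalities by first showing that the approximation entropy of a finite set depends only on its linear span, then identifying it with the approximation entropy of the restriction $T\restricted{G}$ to the generated invariant subspace, at which point Lemma~\ref{lem:PropFourOneVoicu} applies directly with a constant sequence. The final supremum formula then drops out of the first equality together with a monotonicity argument.

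First I would prove $h_{\mathrm{apr}}(T, F) = h_{\mathrm{apr}}(T, \omega)$. The inequality ``$\geq$'' is immediate, since $\omega$ is among the finite subsets of $F$ over which the supremum in~\eqref{eq:approximation-entropy} is taken. For the converse, fix an arbitrary finite $\tilde\omega = \{v_1, \dots, v_m\} \subseteq F$. As $F = \linSpan(\omega)$, I may write each $v_i = \sum_{w \in \omega} c_{i,w} w$ and set $M \coloneqq \max_i \sum_{w} \lvert c_{i,w} \rvert < \infty$ (the trivial case $\tilde\omega \subseteq \{0\}$ being clear). The crucial point is that, because $T$ is linear, the \emph{same} coefficients express $T^k v_i = \sum_w c_{i,w} T^k w$ for every $k$. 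Hence if an orthogonal projection $P$ satisfies $\lVert P T^k w - T^k w \rVert < \delta$ for all $w \in \omega$ and $k \in \{0, \dots, n-1\}$, then $\lVert P T^k v_i - T^k v_i \rVert < M\delta$ for all $i$ and the same range of $k$. This yields
\[
    H_{\mathrm{apr}}\Big( \bigcup_{k=0}^{n-1} T^k \tilde\omega, M\delta \Big) \leq H_{\mathrm{apr}}\Big( \bigcup_{k=0}^{n-1} T^k \omega, \delta \Big),
\]
and after dividing by $n$, letting $n \to \infty$, and taking the supremum over $\delta > 0$ (using that $\delta \mapsto M\delta$ is a bijection of $(0, \infty)$), one obtains $h_{\mathrm{apr}}(T, \tilde\omega) \leq h_{\mathrm{apr}}(T, \omega)$. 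Taking the supremum over $\tilde\omega$ gives the claim.

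Next I would pass to the invariant subspace $G$. Since $T$ is unitary and maps $\bigcup_{k \in \bbZ} T^k F$ bijectively onto itself by shifting the index, $T$ restricts to a unitary operator on the closed subspace $G$, and from $T^k F = \linSpan(T^k \omega)$ one checks that $G = \overline{\linSpan}\big(\bigcup_{k \in \bbZ} T^k \omega\big)$, so $\omega$ is a $T\restricted{G}$-generator. I then need the key observation that $H_{\mathrm{apr}}(\,\cdot\,, \delta)$ for a point set contained in $G$ is the same whether computed in $H$ or in $G$: any approximating subspace $F' \subseteq H$ may be replaced by its orthogonal projection $P_G F' \subseteq G$, which has no larger dimension and approximates the points of $G$ at least as well, because for $x \in G$ and $y \in F'$ one has $\lVert P_G y - x \rVert = \lVert P_G(y - x) \rVert \leq \lVert y - x \rVert$. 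Consequently $h_{\mathrm{apr}}(T, \omega)$ computed in $H$ agrees with $h_{\mathrm{apr}}(T\restricted{G}, \omega)$ computed in $G$, and the same argument applied to arbitrary finite subsets of $G$ gives $h_{\mathrm{apr}}(T, G) = h_{\mathrm{apr}}(T\restricted{G}, G) = h_{\mathrm{apr}}(T\restricted{G})$. Applying Lemma~\ref{lem:PropFourOneVoicu} to the unitary $T\restricted{G}$ on $G$ with the constant sequence $\omega_n \coloneqq \omega$ then yields $h_{\mathrm{apr}}(T\restricted{G}) = h_{\mathrm{apr}}(T\restricted{G}, \omega) = h_{\mathrm{apr}}(T, \omega)$, completing the chain of equalities.

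Finally, for the supremum formula, the inequality $\sup_F h_{\mathrm{apr}}(T, F) \leq h_{\mathrm{apr}}(T)$ follows from monotonicity of the supremum in~\eqref{eq:approximation-entropy} under $F \subseteq H$; conversely, writing $h_{\mathrm{apr}}(T) = h_{\mathrm{apr}}(T, H) = \sup_{\omega} h_{\mathrm{apr}}(T, \omega)$ over finite $\omega \subseteq H$ and using the already-proven equality $h_{\mathrm{apr}}(T, \omega) = h_{\mathrm{apr}}(T, \linSpan \omega)$ bounds each term by $\sup_F h_{\mathrm{apr}}(T, F)$. I expect the main obstacle to be the two invariance statements --- insensitivity of the approximation entropy to the choice of spanning set (via the uniform coefficient bound, which hinges on linearity of $T$) and to the ambient Hilbert space (via the projection $P_G$) --- since everything else is bookkeeping around Voiculescu's lemma.
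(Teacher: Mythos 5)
Your proof is correct and follows essentially the route the paper intends: the paper states the proposition as an immediate consequence of Lemma~\ref{lem:PropFourOneVoicu}, applied exactly as you do with the constant sequence $\omega_n = \omega$ viewed as a $T\restricted{G}$-generator. Your two supporting observations --- invariance of $h_{\mathrm{apr}}$ under change of spanning set (via the uniform coefficient bound) and under passage from $H$ to $G$ (via the projection $P_G$) --- are precisely the routine details the paper leaves implicit in the word ``immediate.''
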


The following upper bound for the approximation entropy on finite-dimensional subspaces follows easily.

\begin{corollary}
    \label{cor:upper-bound-apr-entropy}
    For finite-dimensional subspaces $F \subseteq H$ we have
    \begin{align*}
        h_{\mathrm{apr}}(T,F) \leq \dim(F).
    \end{align*}
\end{corollary}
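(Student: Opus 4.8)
The plan is to reduce the statement to two facts that are already available in the excerpt: the a priori bound $h_{\mathrm{apr}}(T,\omega)\le \lvert\omega\rvert$ for finite subsets $\omega\subseteq H$ (noted immediately after the definition of $h_{\mathrm{apr}}(T,\omega)$), and the identity $h_{\mathrm{apr}}(T,F)=h_{\mathrm{apr}}(T,\omega)$ for any finite spanning set $\omega$ of $F$ (Proposition~\ref{prop:generator-entropy}). Once these are combined with a judicious choice of $\omega$, the corollary drops out immediately, which is why it is advertised as following easily.

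Concretely, I would first fix a finite subset $\omega\subseteq F$ that spans $F$ and has exactly $\dim(F)$ elements; since $F$ is finite-dimensional, any basis of $F$ does the job, giving $F=\linSpan(\omega)$ and $\lvert\omega\rvert=\dim(F)$. Applying Proposition~\ref{prop:generator-entropy} to this $\omega$ yields $h_{\mathrm{apr}}(T,F)=h_{\mathrm{apr}}(T,\omega)$. It then remains to invoke the bound $h_{\mathrm{apr}}(T,\omega)\le\lvert\omega\rvert$. If one wants to make that bound self-contained rather than citing the remark, one observes that $H_{\mathrm{apr}}\big(\bigcup_{k=0}^{n-1}T^k\omega,\delta\big)\le\big\lvert\bigcup_{k=0}^{n-1}T^k\omega\big\rvert\le n\lvert\omega\rvert$ for every $\delta>0$ (the first inequality because a set is approximated to within $\delta$ by its own span), so dividing by $n$, passing to the limit, and taking the supremum over $\delta$ gives $h_{\mathrm{apr}}(T,\omega)\le\lvert\omega\rvert$. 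Chaining the equalities and inequalities produces $h_{\mathrm{apr}}(T,F)=h_{\mathrm{apr}}(T,\omega)\le\lvert\omega\rvert=\dim(F)$.

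I do not anticipate any genuine obstacle: the whole content is the correct bookkeeping of the three definitions together with Proposition~\ref{prop:generator-entropy}. The only point that warrants a sentence of justification is the existence of a spanning set of cardinality exactly $\dim(F)$, which is guaranteed by finite-dimensionality; any spanning set of larger size would still give a bound, but only the tight choice recovers the clean estimate $\dim(F)$.
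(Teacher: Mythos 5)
Your proof is correct and follows exactly the route the paper intends: pick a basis $\omega$ of $F$, use Proposition~\ref{prop:generator-entropy} to get $h_{\mathrm{apr}}(T,F)=h_{\mathrm{apr}}(T,\omega)$, and conclude with the bound $h_{\mathrm{apr}}(T,\omega)\leq\lvert\omega\rvert=\dim(F)$. Your self-contained justification of that last bound (via $H_{\mathrm{apr}}\big(\bigcup_{k=0}^{n-1}T^k\omega,\delta\big)\leq n\lvert\omega\rvert$) is also sound and matches the remark the paper makes right after the definition.
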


For further properties of the approximation entropy see~\cite[Section~7]{Voiculescu-DynamicalApproximationEntropies-1995}.
Recall that for a finite-dimensional subspace  $F \subseteq H$ we call
\begin{align*}
    F_n \coloneqq \linSpan \bigcup_{k = 0}^n T^k F
\end{align*}
the \emph{$n$-th delay subspace} of $F$. Note that the dimension of $F_n$ is upper bounded by $n \cdot \dim(F)$. Furthermore, note that from Proposition~\ref{prop:generator-entropy} it follows that $h_{\mathrm{apr}}(T, F) = h_{\mathrm{apr}}(T, F_n)$.

\begin{theorem}
    \label{thm:linear_lower_bound_dimension}
    Let $H$ be a Hilbert space and $T \colon H \to H$ be unitary, let $F\subseteq H$ be a finite-dimensional linear subspace and $\varepsilon > 0$. Then there exists $K_0 \in \bbN$ and $\delta > 0$ such that whenever for some $n \in \bbN$, for a linear operator $\hat T_n \colon F_n \to F_n$ and $K \geq K_0$ the assertion
    \begin{align*}
        \lVert \hat T^k_n x - T^k x \rVert \leq \delta \lVert x \rVert
    \end{align*}
    holds for all $x \in F_n$ and all $k \in \{0, \dots, K - 1\}$, we conclude that
    \begin{align*}
        \dim(F_n) \geq K (h_{\mathrm{apr}}(T, F) - \varepsilon).
    \end{align*}
\end{theorem}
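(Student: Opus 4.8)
The plan is to turn the hypothesis into a statement about approximating a finite orbit by a \emph{single} subspace, namely $F_n$ itself, and then to invoke the subadditivity underlying the definition of $h_{\mathrm{apr}}$ to conclude that $\dim(F_n)$ must be large. The crucial point is that $\hat T_n$ maps $F_n$ into $F_n$, so every iterate $\hat T_n^k x$ of a vector $x \in F \subseteq F_n$ stays inside $F_n$ and serves as a witness that $T^k x$ is well approximated by the fixed finite-dimensional space $F_n$.

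First I would fix a finite set $\omega \subseteq F$ with $\linSpan(\omega) = F$ and $\lVert x \rVert \leq 1$ for all $x \in \omega$ (for instance an orthonormal basis of $F$). By Proposition~\ref{prop:generator-entropy} we have $h_{\mathrm{apr}}(T, F) = h_{\mathrm{apr}}(T, \omega)$, and since this quantity is finite by Corollary~\ref{cor:upper-bound-apr-entropy} and equals $\sup_{s > 0} h_{\mathrm{apr}}(T, \omega, s)$, I may fix $\delta_0 > 0$ with
\[
    h_{\mathrm{apr}}(T, \omega, \delta_0) \geq h_{\mathrm{apr}}(T, F) - \varepsilon .
\]
I then set $\delta \coloneqq \delta_0/2$ and $K_0 \coloneqq 1$; both depend only on $T$, $F$ and $\varepsilon$, as required. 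Now assume the hypothesis holds for some $n \in \bbN$, some $\hat T_n \colon F_n \to F_n$ and some $K \geq K_0$. Since $F = T^0 F \subseteq F_n$, each $x \in \omega$ lies in $F_n$, hence $\hat T_n^k x \in F_n$ for every $k$, and using $\lVert x \rVert \leq 1$ the hypothesis gives
\[
    \lVert T^k x - \hat T_n^k x \rVert \leq \delta \lVert x \rVert \leq \delta < \delta_0
\]
for all $x \in \omega$ and all $k \in \{0, \dots, K-1\}$. Thus the single subspace $F_n$ $\delta_0$-approximates every element of $\bigcup_{k=0}^{K-1} T^k \omega$, so by definition of $H_{\mathrm{apr}}$,
\[
    H_{\mathrm{apr}}\Bigl( \bigcup_{k=0}^{K-1} T^k \omega, \delta_0 \Bigr) \leq \dim(F_n).
\]

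Finally I would exploit that the sequence $a_m \coloneqq H_{\mathrm{apr}}\bigl( \bigcup_{k=0}^{m-1} T^k \omega, \delta_0 \bigr)$ is subadditive: an optimal approximating space for the first $m$-block, together with its image under the isometry $T^m$, approximates the shifted $n$-block, giving $a_{m+n} \leq a_m + a_n$. This is exactly the subadditivity that makes the limit $h_{\mathrm{apr}}(T, \omega, \delta_0) = \lim_m a_m/m$ exist, and by Fekete's lemma this limit equals $\inf_m a_m/m$. Consequently $a_K/K \geq h_{\mathrm{apr}}(T, \omega, \delta_0)$, and combining this with the previous display yields
\[
    \dim(F_n) \geq a_K \geq K\, h_{\mathrm{apr}}(T, \omega, \delta_0) \geq K\bigl( h_{\mathrm{apr}}(T, F) - \varepsilon \bigr),
\]
which is the assertion.

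I expect the delicate points to be bookkeeping rather than conceptual. One must normalize $\omega$ so that the \emph{relative} error bound $\delta \lVert x \rVert$ supplied by the hypothesis becomes an \emph{absolute} bound strictly below the threshold $\delta_0$ appearing in $H_{\mathrm{apr}}$; this is handled by $\lVert x \rVert \leq 1$ together with the choice $\delta < \delta_0$, and by invoking Proposition~\ref{prop:generator-entropy} to pass freely between $F$ and a conveniently normalized generating set $\omega$. The other point to get right is the \emph{direction} of the inequality $a_K/K \geq h_{\mathrm{apr}}(T, \omega, \delta_0)$: it is precisely the $\inf = \lim$ half of Fekete's lemma for subadditive sequences, and it is this feature that lets us take $K_0 = 1$, in contrast to Theorem~\ref{thm:exponential_lower_bound_dimension}, where a genuine $K_0$ was needed to control the convergence of the entropy averages. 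The unitarity of $T$ enters only through this subadditivity, via the isometric shift $T^m$.
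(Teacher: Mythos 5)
Your proof is correct, and its skeleton matches the paper's: fix a normalized finite generating set $\omega$ of $F$, use Proposition~\ref{prop:generator-entropy} to identify $h_{\mathrm{apr}}(T,F)$ with $h_{\mathrm{apr}}(T,\omega)$, use the hypothesis to exhibit $F_n$ as a single subspace that $\delta_0$-approximates $\bigcup_{k=0}^{K-1}T^k\omega$, conclude $H_{\mathrm{apr}}\bigl(\bigcup_{k=0}^{K-1}T^k\omega,\delta_0\bigr)\leq \dim(F_n)$, and divide by $K$. Where you genuinely depart from the paper is in the final lower bound on the entropy average. The paper chooses $K_0$ so that $\frac{1}{K}H_{\mathrm{apr}}\bigl(\bigcup_{k=0}^{K-1}T^k\omega,\delta\bigr)\geq h_{\mathrm{apr}}(T,\omega,\delta)-\varepsilon/2$ for $K\geq K_0$, i.e.\ it only uses convergence of the sequence of averages and therefore must split $\varepsilon$ and take $K_0$ large. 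You instead invoke the $\inf=\lim$ half of Fekete's lemma for the subadditive sequence $a_m=H_{\mathrm{apr}}\bigl(\bigcup_{k=0}^{m-1}T^k\omega,\delta_0\bigr)$ (the same subadditivity, via unitarity of $T$, that the paper cites from Voiculescu to justify existence of the limit), which gives $a_K/K\geq h_{\mathrm{apr}}(T,\omega,\delta_0)$ for \emph{every} $K$; this buys you $K_0=1$ and a conclusion that is strictly sharper than stated. Two smaller points in your favor: using $\hat T_n^k x\in F_n$ directly as the approximant is cleaner than routing through the orthogonal projection (Lemma~\ref{lem:control}), and your safety margin $\delta=\delta_0/2$ correctly handles the strict inequality in the definition of $H_{\mathrm{apr}}$ — a detail the paper's proof elides, since Lemma~\ref{lem:control} only yields $\lVert P_{F_n}T^kx-T^kx\rVert\leq\delta$ while the definition requires distance $<\delta$. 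The one place to tighten your write-up is the subadditivity sentence: to bound $a_{m+l}$ you should take an optimal space $G_1$ for the first $m$-block plus $T^m G_2$ where $G_2$ is an optimal space for the first $l$-block (not the image of the space for the $m$-block); the idea is clearly what you meant, and the estimate $\dim(G_1+T^mG_2)\leq a_m+a_l$ then closes the argument.
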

\begin{proof}
    Let $\omega$ be an orthonormal basis of $F$.
    From Proposition~\ref{prop:generator-entropy} we observe that $h_{\mathrm{apr}}(T, F) = h_{\mathrm{apr}}(T, \omega)$.
    In particular, there exists $\delta>0$, such that $h_{\mathrm{apr}}(T, F) -\frac{\varepsilon}{2}
        \leq h(T,\omega, \delta). $
    Furthermore, there exists $K_0\in \mathbb{N}$, such that for all $K\geq K_0$ we have
    \begin{align*}
        h_{\mathrm{apr}}(T, F) -\varepsilon
        \leq h(T,\omega, \delta)-\frac{\varepsilon}{2}
        \leq \frac{1}{K} H_{\mathrm{apr}} \left( \bigcup_{k = 0}^{K - 1} T^k \omega, \delta \right).
    \end{align*}
    Now consider $K \geq K_0$ and $n \in \mathbb{N}$, such that the assertion
    \begin{align*}
        \lVert \hat T_n^k x - T^k x \rVert \leq \delta \lVert x \rVert
    \end{align*}
    holds for all $x \in F_n$ and all $k \in \{0, \dots, K-1\}$. Denote by $P_{F_n}$ the orthogonal projection onto the $n$-th delay subspace $F_n$.
    From Lemma~\ref{lem:control} we observe that
    \begin{align*}
        \lVert P_{F_n} T^k x - T^k x \rVert \leq \delta \lVert x \rVert = \delta
    \end{align*}
    for all $x \in \omega$ and all $k \in \{0, \dots, K-1\}$.
    In particular, it follows that
    \begin{align*}
        H_{\mathrm{apr}} \left( \bigcup_{k = 0}^{K - 1} T^k \omega, \delta \right) \leq \dim(F_n).
    \end{align*}
    We observe
    \begin{align*}
        h_{\mathrm{apr}}(T, F) - \varepsilon
        \leq \frac{1}{K} H_{\mathrm{apr}} \left( \bigcup_{k = 0}^{K - 1} T^k \omega, \delta \right) \leq \frac{1}{K} \dim(F_n),
    \end{align*}
    and the statement follows.
\end{proof}

\begin{proof}[Proof of Theorem~\ref{thm:approximation-entropy-Koopman}:]
    Since $F=F_0$ the statement follows directly from Theorem~\ref{thm:linear_lower_bound_dimension} considering $n=0$.
\end{proof}

\section{Interplay of the measure-theoretical and the approximation entropy}
\label{sec:interplayMeasureTheoreticalAndApproximationEntropy}

In this section we will show that any measure-preserving dynamical system with positive measure-theoretic entropy has infinite approximation entropy. For this we will explore the following property.

\begin{definition}
    \label{def:coutable-Lebesgue-spectrum}
    Let $T \colon H \to H$ be a unitary operator on a separable Hilbert space $H$. Then $T$ is said to have \emph{countable Lebesgue spectrum} if and only if the there exists an orthonormal sequence ${(x_n)}_{n \in \bbN}$ of $H$ such that for each $n \in \bbN$ the sequence ${(T^k x_n)}_{k \in \bbZ}$ is an orthonormal basis of the \emph{cyclic subspace}
    $Z(x_n; T) \coloneqq \overline{\linSpan}\{ T^k x_n : k \in \bbZ \}$
    and
    \begin{align*}
        H = \bigoplus_{n = 1}^\infty Z(x_n; T).
    \end{align*}
\end{definition}

\begin{remark}
    The reader familiar with spectral theory might be used to define countable Lebesgue spectrum of a unitary operator $T \colon H \to H$ in the context of separable Hilbert spaces by the equivalence of the Lebesgue measure on $\mathbb{T}$ to the spectral measures appearing in the Hahn–Hellinger theorem. This definition is equivalent to our definition as shown in \cite[Proposition 18.32]{EisnerFarkasHaaseNagel-OperatorTheoreticAspectsErgodicTheory-2015}.
\end{remark}

Whenever $(\mathrm{X},\varphi)$ is a measure-preserving dynamical system, then the Koopman operator $T_\varphi$ fixes the constant functions.
Thus no Koopman operator can have countable Lebesgue spectrum without restricting it to certain non-trivial subspaces.
It is thus common to define that $(\mathrm{X}, \varphi)$ has \emph{countable Lebesgue spectrum} if the Koopman operator restricted to $\linSpan \{ \one \}^\bot$ has countable Lebesgue spectrum~\cite[Section~18.4.2]{EisnerFarkasHaaseNagel-OperatorTheoreticAspectsErgodicTheory-2015}. More generally we can  study the following notion, where we purposefully omit the separability assumption on $H$. Note that for operators on separable Hilbert spaces, this definition is equivalent to the notion of countable Lebesgue components given in~\cite[Section~3.7.j]{HasselblattKatok-PrincipalStructures-2002}.

\begin{definition}
    A unitary operator $T \colon H \to H$ on a Hilbert space $H$ is said to have \emph{a countable Lebesgue component} if there exists a closed separable subspace $H_0 \subseteq H$ with $T H_0 = H_0$ such that the $T$ restricted to $H_0$ has countable Lebesgue spectrum.
\end{definition}

We next show that unitary operators with countable Lebesgue components have infinite approximation entropy.

\begin{proposition}
    \label{prop:contLebesgucomp-implies-approxEntInfty}
    Let $T \colon H \to H$ be a unitary operator on a Hilbert space $H$ that has a countable Lebesgue component. Then $h_\mathrm{apr}(T) = \infty$.
\end{proposition}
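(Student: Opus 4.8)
The plan is to reduce everything to the single cyclic building block provided by countable Lebesgue spectrum and then exploit the scaling properties of the approximation entropy that are already recorded in the excerpt. First I would restrict attention to the separable invariant subspace $H_0$ on which $T$ has countable Lebesgue spectrum, so $H_0 = \bigoplus_{n=1}^\infty Z(x_n;T)$ with each $(T^k x_n)_{k\in\bbZ}$ an orthonormal basis of $Z(x_n;T)$. Since the approximation entropy is monotone under passing to invariant subspaces — indeed $h_{\mathrm{apr}}(T)=h_{\mathrm{apr}}(T,H)\geq h_{\mathrm{apr}}(T\restricted{H_0})$ because any subset $\omega\subseteq H_0$ is also a subset of $H$ and the ambient Hilbert space only enlarges the supply of approximating subspaces — it suffices to prove $h_{\mathrm{apr}}(T\restricted{H_0}) = \infty$. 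So without loss of generality I would assume $T$ itself has countable Lebesgue spectrum on $H$.

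The core computation is a single shift estimate. Fix one generating vector $x \coloneqq x_1$ and let $F \coloneqq \linSpan\{x\}$ be one-dimensional. The key claim is that $h_{\mathrm{apr}}(T, F) \geq 1$; that is, the orbit of a single vector generating a cyclic subspace with Lebesgue spectrum already carries one unit of approximation entropy. To see this I would bound $H_{\mathrm{apr}}(\bigcup_{k=0}^{n-1} T^k x, \delta)$ from below. The point is that the $n$ vectors $x, Tx, \dots, T^{n-1}x$ are mutually orthonormal (as distinct elements of the orthonormal basis $(T^k x)_{k\in\bbZ}$ of $Z(x;T)$), so approximating all of them to within a small $\delta$ forces the approximating subspace to have dimension growing linearly in $n$. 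Concretely, if $P$ is an orthogonal projection with $\lVert P T^k x - T^k x\rVert < \delta$ for all $0 \leq k < n$, then the projected vectors $P T^k x$ are nearly orthonormal, so a standard almost-orthogonality argument (for $\delta$ small enough, say $\delta < 1/2$) shows they are linearly independent, whence $\dim(PH) \geq n$. Therefore $H_{\mathrm{apr}}(\bigcup_{k=0}^{n-1} T^k x, \delta) \geq n$ for all such $\delta$, giving $h_{\mathrm{apr}}(T, x, \delta) \geq 1$ and hence $h_{\mathrm{apr}}(T, F) = h_{\mathrm{apr}}(T, x) \geq 1$ by taking the supremum over $\delta$.

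To push the entropy to infinity rather than just $1$, I would apply the same estimate to the finite set $\omega_m \coloneqq \{x_1, \dots, x_m\}$ built from the first $m$ generating vectors, with $F^{(m)} \coloneqq \linSpan(\omega_m)$. Because the cyclic subspaces are mutually orthogonal and each contributes its own orthonormal family $(T^k x_j)_{k\in\bbZ}$, the $mn$ vectors $\{T^k x_j : 0\leq k < n, 1\leq j\leq m\}$ are all mutually orthonormal. The identical almost-orthogonality argument then gives $H_{\mathrm{apr}}(\bigcup_{k=0}^{n-1} T^k \omega_m, \delta) \geq mn$ for $\delta < 1/2$, so that $h_{\mathrm{apr}}(T, F^{(m)}) = h_{\mathrm{apr}}(T, \omega_m) \geq m$. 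Since $m$ is arbitrary and $h_{\mathrm{apr}}(T) = \sup_F h_{\mathrm{apr}}(T,F) \geq h_{\mathrm{apr}}(T, F^{(m)})$ by Proposition~\ref{prop:generator-entropy}, I conclude $h_{\mathrm{apr}}(T) = \infty$.

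The main obstacle is making the almost-orthogonality step rigorous and uniform: I must verify that if $N$ unit vectors are pairwise orthogonal and each is moved less than $\delta$ by an orthogonal projection $P$, then their images span a subspace of dimension at least $N$, with a threshold on $\delta$ that does not depend on $N$ (here $\delta < 1/2$, perhaps even $\delta < 1/\sqrt{2}$, suffices). The cleanest route is a Gram-matrix estimate — the Gram matrix of the $\{P T^k x_j\}$ differs from the identity by an operator of norm controlled by $\delta$, forcing it to be invertible and hence the vectors to be independent. A minor point to handle carefully is that the definition of $H_{\mathrm{apr}}$ uses strict inequality $\lVert Px - x\rVert < \delta$, so I would fix $\delta$ strictly below the threshold and note the lower bound holds for every subspace meeting the approximation constraint, which is exactly what the minimum in $H_{\mathrm{apr}}$ ranges over.
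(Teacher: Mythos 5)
Your overall strategy coincides with the paper's: reduce to the separable invariant subspace $H_0$ on which $T$ has countable Lebesgue spectrum, note that the $mn$ vectors $\{T^k x_j : 0 \leq k < n,\ 1 \leq j \leq m\}$ form an orthonormal system, bound $H_{\mathrm{apr}}$ of this set from below by something linear in $mn$, and let $m \to \infty$. The reduction step and the final supremum argument are fine. The key quantitative step, however, is false as you state it. You claim there is a threshold on $\delta$, \emph{uniform in $N$}, such that if an orthogonal projection $P$ moves each of $N$ orthonormal vectors by less than $\delta$, then the images are linearly independent and $\dim(PH) \geq N$. No such uniform threshold exists. Counterexample: let $v_1, \dots, v_N$ be orthonormal, set $u \coloneqq N^{-1/2}\sum_{i=1}^N v_i$, and let $P$ be the orthogonal projection onto $\{u\}^\bot$. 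Then $Pv_i = v_i - N^{-1/2}u$, so $\lVert Pv_i - v_i \rVert = N^{-1/2} < \delta$ as soon as $N > \delta^{-2}$, and yet $\sum_{i=1}^N Pv_i = 0$; the subspace $F = \{u\}^\bot \cap \linSpan\{v_1, \dots, v_N\}$ shows $H_{\mathrm{apr}}(\{v_1,\dots,v_N\}, \delta) \leq N - 1$. The Gram-matrix route you propose fails for exactly this reason: in the example the Gram matrix of the $Pv_i$ is $I - \frac{1}{N}J$ ($J$ the all-ones matrix), whose entries differ from those of $I$ by only $1/N = \delta^2$, but whose operator-norm distance from $I$ is exactly $1$ and which is singular. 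Entrywise perturbations of size $O(\delta^2)$ spread over $N \approx \delta^{-2}$ columns do not control the operator norm. By projecting out several such ``flat'' vectors one can in fact lose on the order of $\delta^2 N$ dimensions, so a dimension loss proportional to $N$ is unavoidable.

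The correct statement is precisely Lemma~\ref{lem:voiculescuON} (Voiculescu's Lemma~7.8), which the paper invokes at this point: for a finite orthonormal system $\omega$ one has $H_{\mathrm{apr}}(\omega, \delta) \geq \lvert \omega \rvert (1 - \delta^2)$, and this bound is sharp. It suffices for your argument: it gives $h_{\mathrm{apr}}(T, \omega_m, \delta) \geq m(1-\delta^2)$, and the loss factor $(1-\delta^2)$ disappears when you take the supremum over $\delta > 0$, yielding $h_{\mathrm{apr}}(T, \omega_m) \geq m$ and hence $h_{\mathrm{apr}}(T) = \infty$. So your proof is repaired by replacing your claimed lemma with this weaker bound (either citing it or proving it, e.g.\ by summing $\lVert Pv_i \rVert^2 \geq (1-\delta)^2$-type estimates against $\operatorname{tr}(P)$); as written, though, the almost-orthogonality step is a genuine gap, not an omitted routine verification.
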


For its proof we need the following lemma from~\cite[Lemma 7.8]{Voiculescu-DynamicalApproximationEntropies-1995}.

\begin{lemma}
    \label{lem:voiculescuON}
    If $\omega\subseteq H$ is a finite orthonormal system and $\delta > 0$, then
    \begin{align*}
        H_{\mathrm{apr}}(\omega,\delta) \geq \lvert \omega \rvert (1 - \delta^2).
    \end{align*}
\end{lemma}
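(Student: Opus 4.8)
The plan is to work directly from the projection form of $H_{\mathrm{apr}}(\omega,\delta)$, namely to fix an arbitrary orthogonal projection $P$ satisfying $\lVert Px - x\rVert < \delta$ for every $x \in \omega$ and show that its rank necessarily exceeds $\lvert\omega\rvert(1-\delta^2)$; taking the minimum over all such $P$ then yields the asserted bound. If $\dim(PH) = \infty$ there is nothing to prove, so I would assume $r \coloneqq \dim(PH) < \infty$.

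The single quantity that drives the whole argument is the sum $S \coloneqq \sum_{x \in \omega} \lVert Px \rVert^2$, which I would estimate from below and from above. For the lower bound I would use that $\omega$ is orthonormal, so $\lVert x \rVert = 1$ for each $x \in \omega$, together with the Pythagorean identity $\lVert x \rVert^2 = \lVert Px\rVert^2 + \lVert x - Px\rVert^2$ valid for the orthogonal projection $P$. The hypothesis $\lVert x - Px\rVert = \lVert Px - x\rVert < \delta$ then gives $\lVert Px\rVert^2 > 1 - \delta^2$, and summing over $\omega$ produces $S > \lvert\omega\rvert(1-\delta^2)$.

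For the upper bound I would pick an orthonormal basis $e_1,\dots,e_r$ of $PH$ and expand $\lVert Px\rVert^2 = \sum_{j=1}^r \lvert \langle x, e_j\rangle\rvert^2$. Interchanging the two finite sums gives $S = \sum_{j=1}^r \sum_{x\in\omega} \lvert \langle e_j, x\rangle\rvert^2$, and for each fixed $j$ Bessel's inequality for the orthonormal system $\omega$ bounds the inner sum by $\lVert e_j\rVert^2 = 1$. Hence $S \leq r = \dim(PH)$. Combining the two estimates yields $\dim(PH) \geq S > \lvert\omega\rvert(1-\delta^2)$, and the claim follows after taking the minimum over admissible $P$.

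The argument is essentially a double-counting (trace) computation, so I do not expect a serious obstacle; the only point requiring a little care is recognizing that the right object to bound two ways is $S = \sum_{x\in\omega}\lVert Px\rVert^2$, and that the orthonormality of $\omega$ is exactly what makes the upper estimate $S \leq \dim(PH)$ work via Bessel's inequality (equivalently, $\sum_{x\in\omega}\lvert\langle e_j,x\rangle\rvert^2 = \lVert P_\omega e_j\rVert^2 \leq 1$ for the orthogonal projection $P_\omega$ onto $\linSpan(\omega)$). The infinite-rank case and the bookkeeping of strict versus non-strict inequalities are the only remaining routine details.
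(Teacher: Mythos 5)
Your proposal is correct. Note that the paper does not prove this lemma at all: it imports it verbatim as Lemma~7.8 of Voiculescu's paper. Your argument --- bounding $S=\sum_{x\in\omega}\lVert Px\rVert^2$ from below by Pythagoras ($\lVert Px\rVert^2 = 1-\lVert x-Px\rVert^2 > 1-\delta^2$) and from above by Bessel's inequality applied to an orthonormal basis of $PH$ (equivalently, $S=\operatorname{tr}$-type double counting gives $S\leq\dim(PH)$) --- is exactly the standard proof of that cited result, so your write-up correctly supplies what the paper delegates to the reference, including the routine reduction to $\dim(PH)<\infty$ and the passage to the minimum over admissible projections.
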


\begin{proof}[Proof of Proposition~\ref{prop:contLebesgucomp-implies-approxEntInfty}]
    For closed separable subspaces $H_0 \subseteq H$ with $T H_0 = H_0$ it is straight forward to observe that $h_{\mathrm{apr}}(T\restricted{H_0}) \leq h_{\mathrm{apr}}(T)$. We can thus assume without loss of generality that $H$ is separable and that $T$ has countable Lebesgue spectrum.

    Let $(x_n)_{n \in \bbN}$ be the orthonormal sequence from Definition~\ref{def:coutable-Lebesgue-spectrum} and set $\omega_j \coloneqq \{ x_1, \dots, x_j \}$ for each $j \in \bbN$. Then for each $n \in \bbN$ the set $\bigcup_{k = 0}^{n - 1} T^k \omega$ is an orthonormal system with cardinality $j \cdot n$ and from Lemma~\ref{lem:voiculescuON} we obtain
    \begin{align*}
        H_{\mathrm{apr}} \left( \bigcup_{k = 0}^{n - 1} T^k \omega, \delta \right) \geq (j \cdot n) (1 - \delta^2)
    \end{align*}
    for every $\delta > 0$. Then $h_{\mathrm{apr}}(T, \omega_j)  \geq j$ holds for all $j \in \bbN$. Taking the supremum over all $\omega_j$, we obtain $h_{\mathrm{apr}}(T) = \infty$.
\end{proof}

Following~\cite[Section~2.1]{Glasner-ErgodicTheoryViaJoinings-2003} we introduce standard Lebesgue spaces. A measurable space $(X, \Sigma)$, where $X$ is a completely metrizable and second countable topological space and $\Sigma = \calB(X)$ is the Borel-$\sigma$-algebra of $X$, is called \emph{standard Borel}. A measure space $(X, \Sigma, \mu)$ is called \emph{standard Lebesgue} if $(X, \Sigma)$ is a standard Borel and additionally $\mu$ is a regular probability measure.
If $K$ is compact and metrizable, then it is easily checked that $(K, \calB(K), \mu)$ is standard Lebesgue for each probability measure $\mu \colon \calB(K) \to [0,1]$.

From~\cite[Result~14.3]{Rokhlin-EntropyTheoryMeasurePreservingTransformations-1967} and also~\cite[Theorem~3.7.13]{HasselblattKatok-PrincipalStructures-2002} we know the following proposition in the case of $\mathrm{X}$ being a standard Lebesgue space. We will present below how this follows for general invertible measure-preserving systems.

\begin{proposition}
    \label{prop:posMeasureEntropyImpliesCountableLebesgueComponent}
    Let $(\mathrm{X}, \varphi)$ be an invertible measure-preserving dynamical system with $h_\mu(\varphi) > 0$. Then $T_\varphi$ has a countable Lebesgue component.
\end{proposition}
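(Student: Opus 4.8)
The plan is to reduce to the standard Lebesgue case, for which the statement is already cited, by passing to a suitable separable $T_\varphi$-invariant subspace of $\mathrm{L}^2(\mathrm{X})$. Since $h_\mu(\varphi) = \sup_\alpha h_\mu(\varphi, \alpha) > 0$, I would first fix a finite $\Sigma$-measurable partition $\alpha$ with $h_\mu(\varphi, \alpha) > 0$. Because $\varphi$ is invertible, the sub-$\sigma$-algebra $\Sigma_0 \coloneqq \bigvee_{k \in \bbZ} \varphi^{-k}(\alpha)$ is generated by the countable family $\{\varphi^{-k}(A) : A \in \alpha, \, k \in \bbZ\}$ and satisfies $\varphi^{-1}(\Sigma_0) = \Sigma_0 = \varphi(\Sigma_0)$. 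Consequently $H_0 \coloneqq \mathrm{L}^2(X, \Sigma_0, \mu)$ is a closed linear subspace of $\mathrm{L}^2(\mathrm{X})$ that is separable (as $\Sigma_0$ is countably generated) and satisfies $T_\varphi H_0 = H_0$, since $f$ being $\Sigma_0$-measurable forces $f \circ \varphi$ to be $\Sigma_0$-measurable.

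Next I would identify $T_\varphi \restricted{H_0}$ with the Koopman operator of the factor system $(\mathrm{X}_0, \varphi_0)$, where $\mathrm{X}_0 = (X, \Sigma_0, \mu \restricted{\Sigma_0})$ and $\varphi_0$ denotes $\varphi$ regarded on $\Sigma_0$; as $\varphi_0$ is again an invertible measure-preserving map, this operator is unitary. The factor inherits positive entropy: every set in $\bigvee_{k = 0}^{n-1} \varphi^{-k}(\alpha)$ lies in $\Sigma_0$ and carries the same measure, so $h_\mu(\varphi_0, \alpha) = h_\mu(\varphi, \alpha) > 0$ and hence $h_\mu(\varphi_0) > 0$.

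The crux is to make the cited classical result applicable, since it is stated only for standard Lebesgue spaces while $\mathrm{X}$ is arbitrary. Here I would invoke the structure theory of separable measure algebras: as $\Sigma_0$ is countably generated, the measure algebra of $\mathrm{X}_0$ is separable and therefore isomorphic to the measure algebra of a standard Lebesgue space, the isomorphism intertwining the algebra automorphism induced by $\varphi_0$. Such a measure-algebra isomorphism induces a unitary equivalence between the corresponding Koopman operators and preserves measure-theoretic entropy. Thus, up to this equivalence, the factor is a positive-entropy invertible system on a standard Lebesgue space, and the already-cited special case of the present proposition yields that its Koopman operator has a countable Lebesgue component.

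Finally I would transfer the component back. Countable Lebesgue spectrum of the restricted unitary is a spectral property preserved under unitary equivalence, so pulling the relevant closed separable invariant subspace through the unitary equivalence and then through the isometric inclusion $H_0 \subseteq \mathrm{L}^2(\mathrm{X})$ produces a closed separable $T_\varphi$-invariant subspace of $\mathrm{L}^2(\mathrm{X})$ on which $T_\varphi$ has countable Lebesgue spectrum; this is precisely a countable Lebesgue component of $T_\varphi$. I expect the middle step to be the main obstacle: one must carefully justify the passage to a standard Lebesgue realization of the factor and verify that both the entropy and the countable-Lebesgue-spectrum property survive the measure-algebra isomorphism, which is exactly the point where the absence of any standardness hypothesis on $\mathrm{X}$ must be circumvented.
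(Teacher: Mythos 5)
Your proposal is correct and follows the same overall strategy as the paper: pass to the countably generated $\varphi$-invariant $\sigma$-algebra $\Sigma_0$ generated by the $\bbZ$-orbit of a positive-entropy finite partition, realize the resulting separable factor as a standard Lebesgue system, apply the cited classical result of Rokhlin there, and pull the countable Lebesgue component back through the isometric inclusion $\mathrm{L}^2(X, \Sigma_0, \mu) \subseteq \mathrm{L}^2(\mathrm{X})$; the paper's Lemma~\ref{lem:extensionCountableLebesgueComponent} performs exactly this last transfer, using that Markov embeddings are isometric and hence preserve orthonormality. The one genuine difference lies in the realization step, which you rightly identify as the crux. The paper (Lemma~\ref{lem:LebesgueFactors}) invokes the topological model theorem~\cite[Theorem~12.22]{EisnerFarkasHaaseNagel-OperatorTheoreticAspectsErgodicTheory-2015} to obtain a compact metric space $K$, a measure-preserving system on $(K, \calB(K), \xi)$, and a Markov isomorphism intertwining the Koopman operators, with entropy preserved via the Kolmogorov--Sinai generator theorem; you instead appeal to the classification of separable measure algebras (every such algebra is isomorphic to that of a standard Lebesgue space). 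Your route works, but note a point you gloss over: the measure-algebra isomorphism transports $\varphi_0$ only to an \emph{automorphism of the measure algebra} of the standard Lebesgue space, whereas Rokhlin's result concerns point transformations, so you must additionally invoke von Neumann's point-realization theorem (valid precisely on standard Lebesgue spaces) to produce an invertible measure-preserving point map inducing that automorphism. With that reference supplied, your argument closes: entropy is a measure-algebra invariant and countable Lebesgue spectrum is a unitary invariant, as you say, so both the hypothesis and the conclusion survive the passage.
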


In order to deduce the general statement from the standard Lebesgue space case, we will need the notion of a factor. Let $(\mathrm{X}, \varphi)$ and $(\mathrm{Y}, \psi)$ be measure-preserving dynamical systems on the probability spaces $\mathrm{X} = (X, \Sigma, \mu)$ and $\mathrm{Y} = (Y, \Sigma', \nu)$. Then $\psi$ is called a \emph{factor of $\varphi$}, if there exists an injective Markov operator (also called \emph{Markov embedding}) $T \colon \mathrm{L}^2(\mathrm{Y}) \to \mathrm{L}^2(\mathrm{X})$ that intertwines as $T_\varphi T = T T_\psi$. We refer to Section~\ref{subsec:conditionalExpectations} for a definition of Markov operators.
By~\cite[Theorems~2.3 and~2.4]{Walters-IntroductionToErgodicTheory-1982} this is equivalent to the definition of a factor in~\cite[Section~2.2 on p.~55]{Glasner-ErgodicTheoryViaJoinings-2003}. Note that in the above case the system $\psi$ is sometimes called a \emph{semi-conjugate image} of $\varphi$~\cite[Definition~2.7]{Walters-IntroductionToErgodicTheory-1982}. If $\mathrm{X}$ and $\mathrm{Y}$ are both standard Lebesgue spaces, then the above definition of a factor coincides with the definition of a factor as provided in~\cite[Definition~2.7]{Walters-IntroductionToErgodicTheory-1982}. We refer to the remark succeeding~\cite[Definition~2.7]{Walters-IntroductionToErgodicTheory-1982} for details.

\begin{lemma}
    \label{lem:LebesgueFactors}
    Let $(\mathrm{X}, \varphi)$ be an invertible measure-preserving dynamical system. Then $h_\mu(\varphi) = \sup_\psi h_\nu(\psi)$, where the supremum is taken over all standard Lebesgue factors $(\mathrm{Y}, \psi)$ of $\varphi$.
\end{lemma}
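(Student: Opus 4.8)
The plan is to prove the two inequalities separately, using throughout that $h_\mu(\varphi)=\sup_\alpha h_\mu(\varphi,\alpha)$ with $\alpha$ ranging over finite $\Sigma$-measurable partitions, and that every such $\alpha$ is realized by a symbolic standard Lebesgue factor. For the inequality $h_\mu(\varphi)\leq \sup_\psi h_\nu(\psi)$, I would fix a finite partition $\alpha=\{A_1,\dots,A_m\}$ and build its symbolic factor. Using invertibility of $\varphi$, define $\pi\colon X\to Y\coloneqq\{1,\dots,m\}^{\bbZ}$ by letting $\pi(x)_k$ be the index of the cell of $\alpha$ containing $\varphi^k(x)$, equip $Y$ with the invertible left shift $\psi$, and set $\nu\coloneqq\pi_*\mu$. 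Since $Y$ is compact metrizable, $(\mathrm{Y},\psi)$ is standard Lebesgue, and the relation $\pi\circ\varphi=\psi\circ\pi$ shows that $T\colon\mathrm{L}^2(\mathrm{Y})\to\mathrm{L}^2(\mathrm{X})$, $g\mapsto g\circ\pi$, is a Markov embedding intertwining $T_\varphi T=T T_\psi$; hence $(\mathrm{Y},\psi)$ is a standard Lebesgue factor of $\varphi$. Writing $\beta$ for the time-zero partition of $Y$, one checks $\pi^{-1}(\psi^{-k}\beta)=\varphi^{-k}\alpha$ cell by cell, so $H_\nu(\bigvee_{k=0}^{n-1}\psi^{-k}\beta)=H_\mu(\bigvee_{k=0}^{n-1}\varphi^{-k}\alpha)$ for every $n$ and thus $h_\nu(\psi,\beta)=h_\mu(\varphi,\alpha)$.

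Since $\beta$ is a two-sided generator of the full shift, the Kolmogorov--Sinai theorem (e.g.\ \cite[Theorem~4.17]{Walters-IntroductionToErgodicTheory-1982}) gives $h_\nu(\psi)=h_\nu(\psi,\beta)=h_\mu(\varphi,\alpha)$. Taking the supremum over all finite $\Sigma$-measurable partitions $\alpha$ then yields $h_\mu(\varphi)\leq\sup_\psi h_\nu(\psi)$.

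For the reverse inequality $\sup_\psi h_\nu(\psi)\leq h_\mu(\varphi)$, I would show that passing to a factor cannot increase entropy. Given a standard Lebesgue factor with Markov embedding $T\colon\mathrm{L}^2(\mathrm{Y})\to\mathrm{L}^2(\mathrm{X})$ satisfying $T_\varphi T=T T_\psi$, I use that $T$ is a lattice homomorphism that fixes $\one$ and preserves the integral, so that for a finite partition $\gamma=\{B_1,\dots,B_r\}$ of $Y$ the functions $T\one_{B_j}=\one_{A_j'}$ are indicators of a partition $\alpha'$ of $X$ with $\mu(A_j')=\nu(B_j)$. The intertwining relation gives $T\one_{\psi^{-k}(B_j)}=\one_{\varphi^{-k}(A_j')}$, and because $T$ preserves the measure the static entropies of all refinements agree, whence $h_\mu(\varphi,\alpha')=h_\nu(\psi,\gamma)$. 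Taking the supremum over $\gamma$ gives $h_\nu(\psi)\leq h_\mu(\varphi)$, and then the supremum over all factors $\psi$ completes the argument.

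The main obstacle is not the entropy bookkeeping but the translation between the operator-theoretic definition of a factor and the pointwise/partition picture. Concretely, I must justify that a Markov embedding is a lattice homomorphism (so it sends indicators to indicators and partitions to partitions) and that it preserves the integral, $\int_X Tg\dx\mu=\int_Y g\dx\nu$; these are precisely the structural facts behind the equivalence of the two notions of factor referenced after the definition of a factor, and for the standard Lebesgue factor this recovers an honest measurable factor map. Measurability of $\pi$ and shift-invariance of $\nu=\pi_*\mu$ also hinge on invertibility of $\varphi$, which is exactly why the hypothesis is imposed. Once these structural properties are secured, both inequalities reduce to the elementary observation that the partition processes upstairs and downstairs have identical finite-dimensional distributions and therefore identical Kolmogorov--Sinai entropies.
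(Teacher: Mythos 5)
Your proof is correct, and for the nontrivial inequality $h_\mu(\varphi)\leq\sup_\psi h_\nu(\psi)$ it takes a genuinely different route from the paper. The paper never builds a symbolic model: given a finite partition $\alpha$ it restricts $\mu$ to the $\varphi$-invariant $\sigma$-algebra $\Sigma'$ generated by $\bigcup_{n\in\bbN}\bigvee_{k=-n}^{n}\varphi^{k}(\alpha)$, notes that the resulting $\mathrm{L}^2$ space is separable, and invokes the abstract model theorem~\cite[Theorem~12.22]{EisnerFarkasHaaseNagel-OperatorTheoreticAspectsErgodicTheory-2015} to obtain a Markov isomorphism onto the $\mathrm{L}^2$ space of a compact metric system; the Kolmogorov--Sinai generator theorem then identifies $h_\mu(\varphi,\alpha)$ with the entropy of that standard Lebesgue factor. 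Your itinerary map $\pi\colon X\to\{1,\dots,m\}^{\bbZ}$ achieves the same end concretely: the full shift with $\nu=\pi_*\mu$ is standard Lebesgue by the paper's own remark on compact metrizable spaces, $g\mapsto g\circ\pi$ is a Markov embedding intertwining the Koopman operators, and the time-zero partition is a two-sided generator, so~\cite[Theorem~4.17]{Walters-IntroductionToErgodicTheory-1982} applies exactly as in the paper. What your route buys is elementarity --- it avoids the Gelfand-theoretic model theorem entirely and makes the entropy equality transparent (identical finite-dimensional distributions of the partition process); what it costs is the routine care that $\varphi^{-1}$, and hence $\pi$, is only defined $\mu$-almost everywhere, so $\pi$ should be fixed on a bi-invariant full-measure set before pushing forward. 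For the reverse inequality the paper simply cites monotonicity of entropy under factors from Walters, while you reprove it at the operator level; your argument needs exactly the two structural facts you flag, namely that a Markov embedding is a lattice homomorphism (so it carries partitions of $Y$ to partitions of $X$, compatibly with the dynamics via the intertwining relation) and that it preserves integrals. Note that integral preservation is not literally contained in the paper's abbreviated definition of a Markov operator ($T\geq 0$ and $T\one=\one$), but it is part of the definition in~\cite[Chapter~13]{EisnerFarkasHaaseNagel-OperatorTheoreticAspectsErgodicTheory-2015} to which the paper defers, and together with~\cite[Theorem~13.9]{EisnerFarkasHaaseNagel-OperatorTheoreticAspectsErgodicTheory-2015} (which the paper itself uses in Lemma~\ref{lem:extensionCountableLebesgueComponent}) it secures everything your second half requires.
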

\begin{proof}
    ``$\leq$'': This is a straightforward consequence of the monotonicity of measure-theoretic entropy under factors. For details see~\cite[Section~4.4 on p.~89]{Walters-IntroductionToErgodicTheory-1982}.

    ``$\geq$'':
    Let $\alpha$ be a finite $\Sigma$-measurable partition of $X$ and let $\Sigma'$ be the $\sigma$-algebra generated by the countable set $\bigcup_{n \in \bbN} \bigvee_{k = -n}^n \varphi^k(\alpha)$. Then it follows from a monotone class argument that
    \begin{align*}
        \mathrm{L}^2(X, \Sigma', \nu) = \overline{\linSpan} \bigcup_{n \in \bbN} \left\{\one_A : A \in \bigvee_{k = - n}^n \varphi^k (\alpha) \right\},
    \end{align*}
    where $\nu$ denotes the restriction of $\mu$ to $\Sigma'$ and $\psi \coloneq \varphi$. Denote further $\mathrm{Y} = (X, \Sigma', \nu)$. It follows easily that $\mathrm{L}^2(\mathrm{Y})$ is separable.

    By~\cite[Theorem~12.22]{EisnerFarkasHaaseNagel-OperatorTheoreticAspectsErgodicTheory-2015} there exists measure space $\mathrm{Z} = (K, \calB(K), \xi)$, where $K$ is compact metric and $\xi$ is a probability measure on $\calB(K)$, a measure-preserving dynamical system $(\mathrm{Z}, \kappa)$, and a Markov isomorphism $T_1 \colon \mathrm{L}^2(\mathrm{Z}) \to \mathrm{L}^2(\mathrm{Y})$ that intertwines as $T_\psi T_1 = T_1 T_\kappa$.
    Moreover, the identity operator $T_2  \colon \mathrm{L}^2(\mathrm{Y}) \to \mathrm{L}^2(\mathrm{X})$ is a Markov embedding that trivially intertwines as $T_\varphi T_2 = T_2 T_\psi$. In summary, the operator $T \coloneqq T_2 T_1 \coloneqq \mathrm{L}^2(\mathrm{Z}) \to \mathrm{L}^2(\mathrm{X})$ is a Markov embedding that intertwines as $T_\varphi T = T T_\kappa$. This shows that $\kappa$ is a standard Lebesgue factor of $\varphi$.

    Furthermore, since $(\mathrm{Z}, \kappa)$ and $(\mathrm{Y}, \psi)$ are factors of each other via $T_1$ and $T_1^{-1}$ it follows from monotonicity that $h_\xi(\kappa) = h_\nu(\psi)$. Moreover, the measure-theoretic entropy satisfies $h_\mu(\varphi, \alpha) = h_\kappa(\psi, \alpha) = h_\nu(\psi)$ by the Kolmogorov--Sinai generator theorem~\cite[Theorem~4.17]{Walters-IntroductionToErgodicTheory-1982}. This proves the claim.
\end{proof}

\begin{lemma}
    \label{lem:extensionCountableLebesgueComponent}
    Consider a factor $(\mathrm{Y}, \psi)$ of a measure-preserving dynamical system $(\mathrm{X}, \varphi)$. Whenever $T_\psi$ has a countable Lebesgue component, then so does $T_\varphi$.
\end{lemma}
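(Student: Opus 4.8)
The plan is to transport the witnessing subspace across the Markov embedding $T \colon \mathrm{L}^2(\mathrm{Y}) \to \mathrm{L}^2(\mathrm{X})$ that realizes $(\mathrm{Y},\psi)$ as a factor, i.e.\ the injective Markov operator with $T_\varphi T = T T_\psi$. The one structural fact I will rely on is that a Markov embedding is \emph{isometric} on $\mathrm{L}^2$: since $T$ is multiplicative, commutes with complex conjugation, and preserves integrals, one computes $\langle Tf, Tg\rangle = \int T(f\bar g)\,\mathrm{d}\mu = \int f \bar g\,\mathrm{d}\mu = \langle f, g\rangle$, so $T$ preserves the inner product (this is the content of the cited results from~\cite{EisnerFarkasHaaseNagel-OperatorTheoreticAspectsErgodicTheory-2015}). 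Let $K_0 \subseteq \mathrm{L}^2(\mathrm{Y})$ be a closed separable $T_\psi$-invariant subspace on which $T_\psi$ has countable Lebesgue spectrum, and set $H_0 \coloneqq T K_0$.

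First I would check that $H_0$ is an admissible witness on the $\varphi$-side. Being the isometric image of a complete separable space, $H_0$ is a closed separable subspace of $\mathrm{L}^2(\mathrm{X})$, and the intertwining gives $T_\varphi H_0 = T_\varphi T K_0 = T T_\psi K_0 = T K_0 = H_0$. Thus $T\restricted{K_0} \colon K_0 \to H_0$ is a surjective isometry, hence a unitary isomorphism, and it intertwines $T_\psi \restricted{K_0}$ with $T_\varphi \restricted{H_0}$; in particular $T_\varphi\restricted{H_0}$ is a surjective isometry and therefore unitary, so Definition~\ref{def:coutable-Lebesgue-spectrum} is applicable to it.

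It then remains to transfer the spectral data explicitly. Let $(y_n)_{n\in\bbN}$ be the orthonormal sequence in $K_0$ furnished by countable Lebesgue spectrum of $T_\psi\restricted{K_0}$ and put $x_n \coloneqq T y_n$. Since $T$ preserves inner products, $(x_n)_{n\in\bbN}$ is orthonormal, and from the intertwining relation (valid for all $k\in\bbZ$ once $T_\varphi^{-1}$ is read as the inverse of the unitary $T_\varphi\restricted{H_0}$) we get $T_\varphi^k x_n = T\,T_\psi^k y_n$. Consequently $(T_\varphi^k x_n)_{k\in\bbZ}$ is the isometric image of the orthonormal basis $(T_\psi^k y_n)_{k\in\bbZ}$ of $Z(y_n;T_\psi)$, hence an orthonormal basis of $T\,Z(y_n;T_\psi) = Z(x_n;T_\varphi)$; and applying $T$ to the orthogonal decomposition $K_0 = \bigoplus_n Z(y_n;T_\psi)$ yields $H_0 = \bigoplus_n Z(x_n;T_\varphi)$, because a surjective isometry maps an orthogonal direct sum onto the orthogonal direct sum of the images. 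This exhibits countable Lebesgue spectrum of $T_\varphi\restricted{H_0}$, so $T_\varphi$ has a countable Lebesgue component.

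The only delicate points, which I would treat carefully rather than the transport argument itself (which is formal once $T$ is known to be isometric), are: (i) justifying that the Markov embedding is genuinely an $\mathrm{L}^2$-isometry preserving the complex inner product, for which the multiplicativity/lattice-homomorphism characterization of Markov embeddings is essential; and (ii) the invertibility bookkeeping needed to make sense of negative powers $T_\varphi^{-k}$ and of the relation $T T_\psi^{-1} = T_\varphi^{-1} T$ on $K_0$, which is handled by observing that all negative powers ever act only on the invariant subspaces $K_0$ and $H_0$, where the relevant operators are surjective isometries and thus unitary.
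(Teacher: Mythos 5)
Your proof is correct and follows essentially the same route as the paper's: transport the witnessing subspace $H_0$ through the Markov embedding $T$, use the intertwining relation $T_\varphi T = T T_\psi$ to get invariance of the image, and use that Markov embeddings are isometric (the paper cites \cite[Theorem~13.9]{EisnerFarkasHaaseNagel-OperatorTheoreticAspectsErgodicTheory-2015}) to preserve orthonormality and hence countable Lebesgue spectrum. The paper's proof is just a terser version of yours; your explicit transfer of the orthonormal sequence, the cyclic subspaces, and the negative-power bookkeeping merely spells out what the paper leaves implicit.
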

\begin{proof}
    Denote by $T \colon \mathrm{L}^2(\mathrm{Y}) \to \mathrm{L}^2(\mathrm{X})$ the associated Markov embedding that intertwines $T_\varphi$ and $T_\psi$ as $T_\varphi T = T T_\psi$
    and $H_0 \subseteq \mathrm{L}^2(\mathrm{Y})$ for a closed separable subspace with $T_\psi H_0 = H_0$, such that $T_\psi$ restricted to $H_0$ has countable Lebesgue spectrum. 
    Clearly $H_1 \coloneqq T H_0$ is a closed separable subspace of $\mathrm{L}^2(\mathrm{X})$ with $T_\varphi H_1 = H_1$. Since any Markov embedding is isometric by~\cite[Theorem 13.9]{EisnerFarkasHaaseNagel-OperatorTheoreticAspectsErgodicTheory-2015} it preserves orthonormality. Hence, $T_\varphi$ restricted to $H_1$ has a countable Lebesgue spectrum.
\end{proof}

\begin{proof}[Proof of Proposition~\ref{prop:posMeasureEntropyImpliesCountableLebesgueComponent}:]
    By Lemma~\ref{lem:LebesgueFactors} there exists a standard Lebesgue factor $(\mathrm{Y}, \psi)$ of $\varphi$ with $h_\nu(\psi) > 0$.
    Since we know the statement in the context of standard Lebesgue systems~\cite[Result~14.3]{Rokhlin-EntropyTheoryMeasurePreservingTransformations-1967} the Koopman operator $T_\psi$ has a countable Lebesgue component. As $\psi$ is a factor of $\varphi$ we observe that also $T_\varphi$ has a countable Lebesgue component from Lemma~\ref{lem:extensionCountableLebesgueComponent}.
\end{proof}

The following theorem now follows from Propositions~\ref{prop:contLebesgucomp-implies-approxEntInfty} and~\ref{prop:posMeasureEntropyImpliesCountableLebesgueComponent}.

\begin{theorem}
    \label{thm:hApproxInfinity}
    Let $(\mathrm{X}, \varphi)$ be an invertible measure-preserving dynamical system with $h_\mu(\varphi) > 0$. Then $h_{\mathrm{apr}}(T_\varphi) = \infty$.
\end{theorem}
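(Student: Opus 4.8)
The plan is to obtain the statement as an immediate consequence of the two preceding propositions, chained together through the observation that the Koopman operator of an invertible system is unitary. The strategy breaks into three short steps, and I expect no genuine obstacle since all of the substance has already been packaged into Propositions~\ref{prop:contLebesgucomp-implies-approxEntInfty} and~\ref{prop:posMeasureEntropyImpliesCountableLebesgueComponent}.

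First, I would record that for an invertible measure-preserving system $(\mathrm{X}, \varphi)$ the Koopman operator $T_\varphi \colon \mathrm{L}^2(\mathrm{X}) \to \mathrm{L}^2(\mathrm{X})$ is unitary. Indeed, measure-preservation gives $\lVert T_\varphi f \rVert_2 = \lVert f \rVert_2$ for every $f$, so $T_\varphi$ is an isometry, and invertibility of $\varphi$ furnishes the two-sided inverse $T_{\varphi^{-1}}$, making $T_\varphi$ a surjective isometry, hence unitary. This is precisely the hypothesis under which Proposition~\ref{prop:contLebesgucomp-implies-approxEntInfty} is stated.

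Second, I would invoke the hypothesis $h_\mu(\varphi) > 0$ together with invertibility to apply Proposition~\ref{prop:posMeasureEntropyImpliesCountableLebesgueComponent} directly, concluding that $T_\varphi$ has a countable Lebesgue component. Third, feeding this conclusion into Proposition~\ref{prop:contLebesgucomp-implies-approxEntInfty} with $T = T_\varphi$ yields $h_{\mathrm{apr}}(T_\varphi) = \infty$, which is exactly the claim.

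The only point I would treat with a sentence of care is the unitarity of $T_\varphi$ from Step one, as it is what licenses the use of Proposition~\ref{prop:contLebesgucomp-implies-approxEntInfty}: without invertibility the Koopman operator is merely an isometry rather than a unitary, and the approximation entropy framework built in Section~\ref{sec:lower-bounds-multiplicity-function} was set up for unitaries. Everything difficult, namely the reduction to standard Lebesgue factors and Rokhlin's theorem on one side, and Voiculescu's orthonormal lower bound (Lemma~\ref{lem:voiculescuON}) on the other, is already discharged in the two propositions, so the proof of the theorem itself is a one-line chaining argument.
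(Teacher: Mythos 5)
Your proposal is correct and matches the paper's own argument exactly: the paper derives Theorem~\ref{thm:hApproxInfinity} as an immediate consequence of chaining Proposition~\ref{prop:posMeasureEntropyImpliesCountableLebesgueComponent} with Proposition~\ref{prop:contLebesgucomp-implies-approxEntInfty}. Your explicit remark that invertibility makes $T_\varphi$ unitary (so that Proposition~\ref{prop:contLebesgucomp-implies-approxEntInfty} applies) is a point the paper leaves implicit, and it is a sensible addition.
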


There exist invertible measure-preserving dynamical systems $(\mathrm{X}, \varphi)$ with $h_\mu(\varphi) = 0$ such that $T_\varphi$ has a countable Lebesgue component (and hence $h_{\mathrm{apr}} (T_\varphi)= \infty$), as the following example demonstrates.

\begin{example}
    Let $\alpha \in \bbR$ be irrational. Consider the skew-rotation on the two-dimensional torus
    \begin{align*}
        \varphi \colon \bbT^2 \to \bbT^2, \quad (x,y) \mapsto (\mathrm{e}^{\pi \mathrm{i} \alpha} x, x y),
    \end{align*}
    and equip $\mathbb{T}$ with the Lebesgue measure $\mu$.
    The mapping $\varphi$ becomes an invertible measure-preserving system.
    Moreover, it has a countable Lebesgue component~\cite[Proposition~18.36]{EisnerFarkasHaaseNagel-OperatorTheoreticAspectsErgodicTheory-2015}. Hence, $h_{\mathrm{apr}}(T_{\varphi}) = \infty$.

    However, the measure-theoretic entropy $h_{\mu}(\varphi)$ is zero. Indeed, as presented in~\cite[Chapter 7]{AuslanderMinimalFLows-1988} the respective topological dynamical system is distal and we observe from ~\cite[Corollary~6.7]{Yan-ConditionalEntropyFiberEntropyAmenableGroupActions-2015} that it has zero topological entropy. From the variational principle~\cite[Theorem 8.6]{Walters-IntroductionToErgodicTheory-1982} it follows that $h_{\mu}(\varphi) = 0$.
\end{example}

\section{A spectral theoretic point of view}
\label{sec:theSpectralPerspective}

In this final section we present some details of known results on the relationship between the approximation entropy and the spectral theory of unitary operators~\cite{Conway-CourseFunctionalAnalysis-1985, EisnerFarkasHaaseNagel-OperatorTheoreticAspectsErgodicTheory-2015,Kriete-AnElementaryApproachMultiplicityTheory-1986,Voiculescu-DynamicalApproximationEntropies-1995}.

Let $T \colon H \to H$ be a unitary operator on a complex and separable Hilbert space and assume that $\bbK = \bbC$. For $x \in H$ we call the unique measure $\mu_x \colon \calB(\bbT) \to \bbR$ satisfying the property
\begin{align*}
    \langle x, T^k x \rangle = \int_\bbT z^{-k} \, \mu_x(\mathrm{d} z)
\end{align*}
for all $k \in \bbZ$ a \emph{spectral measure} of $T$. That such a measure exists follows from the Bochner--Herglotz theorem~\cite[Theorem~18.6]{EisnerFarkasHaaseNagel-OperatorTheoreticAspectsErgodicTheory-2015}. A spectral measure $\mu_x$ can be decomposed by means of a Lebesgue decomposition (see~\cite[p.~22-25]{ReedSimon-FunctionalAnalysisI-1980} or~\cite[Theorem~B.25]{EisnerFarkasHaaseNagel-OperatorTheoreticAspectsErgodicTheory-2015}) into mutually singular measures
\begin{align*}
    \mu_x = \mu_{x, \mathrm{d}} + \mu_{x, \mathrm{ac}} + \mu_{x, \mathrm{sc}},
\end{align*}
where $\mu_{x, \mathrm{d}}$ is a discrete measure, $\mu_{x, \mathrm{ac}}$ is absolutely continuous with respect to the Lebesgue measure $\mathfrak{m}$ on $\bbT$ (in short $\mu_{x ,\mathrm{ac}} \ll \mathfrak{m}$), and $\mu_{x, \mathrm{sc}}$ is non-atomic and singular to the Lebesgue measure (in short $\mu_{x ,\mathrm{sc}} \bot \mathfrak{m}$). Note that all three parts are absolutely continuous with respect to $\mu_x$, and therefore, there exists elements $x_\mathrm{d}, x_\mathrm{ac}, x_\mathrm{sc} \in H$ such that $\mu_{x, \mathrm{*}} = \mu_{x_\mathrm{*}}$ for $* \in \{\mathrm{d}, \mathrm{ac}, \mathrm{sc}\}$~\cite[Lemma~18.9]{EisnerFarkasHaaseNagel-OperatorTheoreticAspectsErgodicTheory-2015}. This yields a decomposition of the Hilbert space $H = H_\mathrm{d} \oplus H_\mathrm{ac} \oplus H_\mathrm{sc}$ into $T$-invariant subspaces
\begin{align*}
    H_\mathrm{d}    & \coloneqq \{ x \in H : \mu_x \text{ is discrete} \}, \quad
    H_\mathrm{ac} \coloneqq \{x \in H : \mu_x \ll \mathfrak{m} \},                                        \\
    H_{\mathrm{sc}} & \coloneqq \{ x \in H : \mu_x \text{ is non-atomic and } \mu_x \bot \mathfrak{m} \}.
\end{align*}
The $T$-invariance follows readily from the fact that $\mu_{Tx} = \mu_x$ for all $x \in H$, as $T$ is unitary. Furthermore, the decomposition into $T$-invariant subspaces yields a decomposition of $T$ into
\begin{align*}
    T = T_\mathrm{d} \oplus T_\mathrm{ac} \oplus T_\mathrm{sc}
\end{align*}
with $T_* \colon H_* \to H_*$ for all $* \in \{\mathrm{d}, \mathrm{ac}, \mathrm{sc}\}$.

\begin{remark}
    \label{rem:KroneckerFactor}
    \begin{enumerate}[label = (\roman*)]
        \item
              The subspace  $H_{\mathrm{d}}$ is given by the closure of the linear span of all eigenvectors of $T$~\cite[Proposition~18.17]{EisnerFarkasHaaseNagel-OperatorTheoreticAspectsErgodicTheory-2015}.

        \item
              If $T = T_{\mathrm{rev}} \oplus T_{\mathrm{aws}}$ is a Jacob--de~Leeuw--Glicksberg decomposition~\cite[Chapter~16.4]{EisnerFarkasHaaseNagel-OperatorTheoreticAspectsErgodicTheory-2015} into a \emph{reversible part} $T_\mathrm{rev}$ and an \emph{almost weakly stable part} $T_\mathrm{aws}$ decomposition of $T$, then $T_{\mathrm{d}}$ and $T_{\mathrm{rev}}$ coincide~\cite[Corollary~18.18]{EisnerFarkasHaaseNagel-OperatorTheoreticAspectsErgodicTheory-2015}.

        \item
              The operator $T_{\mathrm{c}} \coloneqq T_{\mathrm{ac}} \oplus T_{\mathrm{sc}}$ on $H_{\mathrm{c}} \coloneqq H_{\mathrm{ac}} \oplus H_{\mathrm{sc}}$ coincides with the almost weakly stable part $T_{\mathrm{aws}}$ of the Jacob--de~Leeuw--Glicksberg decomposition~\cite[Corollary~18.18]{EisnerFarkasHaaseNagel-OperatorTheoreticAspectsErgodicTheory-2015}. The elements $x \in H_{\mathrm{c}}$ have the property that
              \begin{align*}
                  \lim_{n \to \infty} \frac{1}{n} \sum_{k = 0}^{n - 1} \lvert \langle T^k x, y \rangle \rvert^p = 0
              \end{align*}
              holds for all $y \in H$ and all $p \in [1, \infty)$~\cite[Theorem~16.34]{EisnerFarkasHaaseNagel-OperatorTheoreticAspectsErgodicTheory-2015}. It is also often referred to as the \emph{continuous part} of $T$~\cite[Chapter~18.2]{EisnerFarkasHaaseNagel-OperatorTheoreticAspectsErgodicTheory-2015}.
    \end{enumerate}
\end{remark}

Next we define the concept of the multiplicity function by following~\cite[Section~2]{Kriete-AnElementaryApproachMultiplicityTheory-1986}. Let $\ell^2 \coloneqq \ell^2(\bbN; \bbC)$ denote the Hilbert space of complex square summable sequence indexed by the natural numbers. Further let $\nu \colon \calB(\bbT) \to [0, \infty)$ be a Borel measure on $\bbT$, write $\mathrm{N} \coloneqq (\bbT, \calB(\bbT), \nu)$ for the corresponding measure space and denote by $\mathrm{L}^2(\mathrm{N}; \ell^2)$ the space of all functions $f \colon \bbT \to \ell^2$ such that $\int_\bbT \lVert f \rVert_{\ell^2}^2 \, \mathrm{d} \nu < \infty$. The scalar product $\langle f, g \rangle  = \int_\bbT \langle f(\lambda), g(\lambda)\rangle_{\ell^2} \, \nu(\mathrm{d}\lambda)$ on $\mathrm{L}^2(\mathrm{N}; \ell^2)$ makes it into a Hilbert space.

Now let $m \colon \bbT \to \bbN_0 \cup \{\infty\}$ be Borel measurable function with $\nu(\{m = 0 \}) = 0$ and set $\ell^2_n \coloneqq \{ x = (x_n)_{n \in \bbN} \in \ell^2 : \forall k > n : x_k = 0 \}$ for all $n \in \bbN_0 \cup \{\infty\}$. We denote
\begin{align*}
    H_{\nu, m} \coloneqq \{ f \in \mathrm{L}^2(\bbN; \ell^2) : f(\lambda) \in \ell^2_{m(\lambda)}  \text{ for $\nu$-almost all $\lambda \in \bbT$} \}.
\end{align*}
Then $H_{\nu, m}$ is a closed subspace of $\mathrm{L}^2(\bbN; \ell^2)$. Consider the multiplication operator
\begin{align*}
    T_{\nu, m} \colon H_{\nu, m} \to H_{\nu, m}, \quad (T_{\nu, m} f)(\lambda) \mapsto \lambda \cdot f(\lambda)
\end{align*}
by the identity function.

The representation theorem for unitary operator (or more generally normal operators) states that every unitary operator $T \colon H \to H$ on a separable Hilbert space is unitarily equivalent to $T_{\nu, m}$ for some Borel measure $\nu$ and some Borel function $m$ as above (see~\cite[Theorem~II.9.18]{Conway-SubnormalOperators-1981} or~\cite[Theorem~IX.10.20]{Conway-CourseFunctionalAnalysis-1985}). Moreover, if $\tilde \nu$ is another Borel measure and $\tilde m$ another Borel function with this property, then $\nu$ and $\tilde \nu$ have the same null sets (or equivalently, are mutually absolutely continuous) and $\nu$-almost everywhere we have $m = \tilde m$~\cite[Corollary~II.9.12~b)]{Conway-SubnormalOperators-1981}.
In this case, the measure $\nu$ is called a \emph{maximal spectral measure} and the function $m$ is called a \emph{multiplicity function} of $T$.

The maximal spectral measure $\nu$ is indeed a spectral measure of $T$. To see this denote by $x$ the constant function in $\mathrm{L}^2(\mathrm{N}; \ell^2)$ that maps to the first unit vector $e_1 \in \ell^2$. Since $\nu(\{ m = 0 \}) = 0$, it follows that $x \in H_{\nu, m}$. Then
\begin{align*}
    \langle x, T_{\nu, m}^k x \rangle_{H_{\nu, m}} & = \int_{\bbT} \langle x, T_{\nu, m} x \rangle_{\ell^2} \, \mathrm{d} \nu = \int_{\bbT} z^{-k} \, \nu(\mathrm{d} z)
\end{align*}
holds for all $k \in \bbZ$, from which follows that $\nu = \mu_x$. Thus, by unitary equivalence of $T$ and $T_{\nu, m}$ the statement follows.

Suppose now that $T = T_\mathrm{ac}$ and let $\nu$ be a maximal spectral measure and $m$ be a multiplicity function of $T$. Then there exists $x \in H = H_\mathrm{ac}$ such that $\nu  = \mu_x$. In particular, $\nu$ is absolutely continuous with respect to the Lebesgue measure $\mathfrak{m}$ on $\bbT$. Let $\frac{\mathrm{d} \nu}{\mathrm{d} \mathfrak{m}} \in \mathrm{L}^1(\bbT, \calB(\bbT), \mathfrak{m})$ be its Radon--Nikodým derivative. Denote by $\Delta_T \coloneqq \{\frac{\mathrm{d} \nu}{\mathrm{d} \mathfrak{m}} \neq 0\}$ its set of non-vanishing points, which up to a $\mathfrak{m}$-null set is uniquely defined and does not depend on the maximal spectral measure $\nu$ chosen. Then $\nu$ and $\mathfrak{m}\restricted{\Delta_T}$ are mutually absolutely continuous and can therefore be exchanged in the representation of $T$.

The following was proved for the case in which the multiplicity function for the absolutely continuous part $T_\mathrm{ac}$ is finite in~\cite[Theorem~7.11]{Voiculescu-DynamicalApproximationEntropies-1995}. We present the short argument how the case of a general multiplicity function follows from this.

\begin{proposition}
    \label{prop:apr-entropy-absolutely-cont-spetrum}
    Let $T \colon H \to H$ be a unitary operator on a complex and separable Hilbert space and denote by $m$ the multiplicity function of its absolutely continuous part $T_\mathrm{ac}$. Then
    \begin{align*}
        h_{\mathrm{apr}}(T) = \int_{\Delta_{T_\mathrm{ac}}} m \, \mathrm{d} \mathfrak{m}.
    \end{align*}
    In particular, if $H_\mathrm{ac} = \{ 0 \}$, then $h_\mathrm{apr}(T) = 0$.
\end{proposition}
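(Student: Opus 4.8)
The plan is to derive the general formula from the finite-multiplicity case \cite[Theorem~7.11]{Voiculescu-DynamicalApproximationEntropies-1995} by a truncation-and-monotonicity argument, using that $h_{\mathrm{apr}}$ is monotone under restriction to closed invariant subspaces, as observed in the proof of Proposition~\ref{prop:contLebesgucomp-implies-approxEntInfty}. I distinguish two cases according to whether the integral $\int_{\Delta_{T_\mathrm{ac}}} m \, \mathrm{d}\mathfrak{m}$ is finite or infinite. If it is finite, then $m$ is finite (and integrable) $\mathfrak{m}$-almost everywhere on $\Delta_{T_\mathrm{ac}}$, so the hypothesis of Voiculescu's theorem is met and it applies directly to $T$, yielding the stated equality. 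Hence the only genuine work lies in the case $\int_{\Delta_{T_\mathrm{ac}}} m \, \mathrm{d}\mathfrak{m} = \infty$, where I must show that $h_{\mathrm{apr}}(T) = \infty$.

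For this remaining case I would realize $T_\mathrm{ac}$ through multiplicity theory as the multiplication operator $T_{\nu, m}$ on $H_{\nu, m}$, where $\nu$ is a maximal spectral measure of $T_\mathrm{ac}$ and is mutually absolutely continuous with $\mathfrak{m}\restricted{\Delta_{T_\mathrm{ac}}}$. For $N \in \bbN$ put $m_N \coloneqq \min(m, N)$ and pass to the subspace $H_{\nu, m_N} \subseteq H_{\nu, m}$. Since multiplication by the identity function leaves the coordinatewise constraint $f(\lambda) \in \ell^2_{m_N(\lambda)}$ invariant, $H_{\nu, m_N}$ is closed with $T H_{\nu, m_N} = H_{\nu, m_N}$, and the restriction of $T$ to it is unitarily equivalent to $T_{\nu, m_N}$. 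As $m \geq 1$ holds $\nu$-almost everywhere (because $\nu(\{ m = 0\}) = 0$), the maximal spectral measure of $T_{\nu, m_N}$ stays equivalent to $\mathfrak{m}\restricted{\Delta_{T_\mathrm{ac}}}$, so its associated set is again $\Delta_{T_\mathrm{ac}}$, while its multiplicity function is the finite function $m_N \leq N$. Consequently Voiculescu's theorem applies to $T_{\nu, m_N}$ and gives
\begin{align*}
    h_{\mathrm{apr}}(T_{\nu, m_N}) = \int_{\Delta_{T_\mathrm{ac}}} m_N \, \mathrm{d}\mathfrak{m}.
\end{align*}

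Viewing $H_{\nu, m_N} \subseteq H_\mathrm{ac} \subseteq H$ as a closed $T$-invariant subspace and invoking monotonicity, I obtain $h_{\mathrm{apr}}(T) \geq \int_{\Delta_{T_\mathrm{ac}}} m_N \, \mathrm{d}\mathfrak{m}$ for every $N \in \bbN$. Letting $N \to \infty$ and applying the monotone convergence theorem then forces $h_{\mathrm{apr}}(T) \geq \int_{\Delta_{T_\mathrm{ac}}} m \, \mathrm{d}\mathfrak{m} = \infty$, which settles this case; the final ``in particular'' assertion is the special instance $\Delta_{T_\mathrm{ac}} = \emptyset$ of the formula. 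The step I expect to require the most care is the bookkeeping in the middle paragraph: checking that $H_{\nu, m_N}$ is genuinely $T$-invariant and correctly identifying both the multiplicity function and the set $\Delta_{T_\mathrm{ac}}$ of the restricted operator $T_{\nu, m_N}$, so that the finite-multiplicity theorem can be quoted with the precise integrand $m_N$. Everything else reduces to a direct appeal to Voiculescu's result, the elementary monotonicity of $h_{\mathrm{apr}}$, and monotone convergence.
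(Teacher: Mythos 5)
Your proposal is correct and runs on the same core machinery as the paper's proof: realization of $T_\mathrm{ac}$ as the multiplication operator $T_{\nu,m}$, truncation $m_N = \min(m,N)$ of the multiplicity function, Voiculescu's finite-multiplicity theorem applied to the truncated operators, and monotone convergence. The difference is in how the truncations are reassembled. You split into cases according to whether $\int_{\Delta_{T_\mathrm{ac}}} m \, \mathrm{d}\mathfrak{m}$ is finite or infinite: in the finite case you apply Voiculescu's theorem directly to $T$, and in the infinite case you only need the one-sided bound $h_{\mathrm{apr}}(T) \geq h_{\mathrm{apr}}\bigl(T\restricted{H_{\nu, m_N}}\bigr)$, i.e.\ monotonicity of $h_{\mathrm{apr}}$ under closed invariant subspaces, which the paper indeed records at the start of the proof of Proposition~\ref{prop:contLebesgucomp-implies-approxEntInfty}. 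The paper instead treats all $m$ uniformly: it exhausts $H_{\nu,m}$ by the subspaces $H_n$ and invokes Lemma~\ref{lem:PropFourOneVoicu} together with Proposition~\ref{prop:generator-entropy} to obtain the two-sided identity $\sup_{n} h_{\mathrm{apr}}(T, H_n) = h_{\mathrm{apr}}(T)$, which gives the equality with no case distinction. Your route is more economical (the generator lemma is never needed), but it leans harder on the hypothesis of Voiculescu's theorem: your Case 1 requires that ``finite multiplicity'' mean ``$m$ finite $\mathfrak{m}$-almost everywhere,'' so that integrable but unbounded $m$ is covered by a direct application. This matches the paper's own reading (the semigroup $\calC$ of $\bbN_0$-valued functions), so I count your argument as complete; but note that under the stricter reading ``$m$ essentially bounded,'' an integrable unbounded $m$ would slip through both of your cases, whereas the paper's exhaustion argument, which only ever applies Voiculescu's theorem to the bounded functions $m_n \leq n$, would still go through.
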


For the integral on the right-hand side in Proposition~\ref{prop:apr-entropy-absolutely-cont-spetrum} to be well-defined, the restriction $\Delta_{T_\mathrm{ac}}$ is necessary, as the following example shows.

\begin{example}
    \label{ex:waring-voiculescu}
    Let $\tilde{\mathfrak{m}}$ be the Lebesgue measure supported on $\bbT_ + \coloneqq\{\lambda \in \bbT : \operatorname{Re}(\lambda) \geq 0 \}$. The multiplication operator $T \colon \mathrm{L}^2( \bbT, \calB(\bbT), \tilde{\mathfrak{m}}) \to \mathrm{L}^2( \bbT, \calB(\bbT), \tilde{\mathfrak{m}})$ defined by $f \mapsto \id \cdot f$ has maximal spectral measure $\tilde{\mathfrak{m}}$, and thus, both functions $m_1 \coloneqq \one_{\bbT}$ and $m_2 \coloneqq \one_{\bbT_+}$ are multiplicity functions for $T$.
\end{example}

\begin{proof}[Proof of Proposition~\ref{prop:apr-entropy-absolutely-cont-spetrum}]
    As shown in the proof of~\cite[Theorem~7.11]{Voiculescu-DynamicalApproximationEntropies-1995} we may assume that $T = T_\mathrm{ac}$. Let $m$ be a multiplicity function of $T$.

    If $m < \infty$, then the statement of Proposition~\ref{prop:apr-entropy-absolutely-cont-spetrum} is proved in~\cite[Theorem~7.11]{Voiculescu-DynamicalApproximationEntropies-1995} by showing that on the additive semigroup $\calC$ of functions $m \colon \bbT \to \bbN_0$ the functional
    \begin{align*}
        \mu \colon \calC \to [0, \infty), \quad m \mapsto h_\mathrm{apr}(T),
    \end{align*}
    where $T = T_{\nu, m}$ and $\nu = \mathfrak{m}\restricted{\{m \neq 0 \}}$, coincides with the integral $\int_\bbT m \, \mathrm{d}\mathfrak{m} = \int_{\Delta_{T}} m \, \mathrm{d} \mathfrak{m}$.

    Now consider the general case and let $T_{\nu, m}$ be a representation of $T$ with $\nu = \mathfrak{m}\restricted{\{m \neq 0 \}}$. Consider the closed subspaces
    \begin{align*}
        H_n \coloneqq \{ f \in H_{\nu, m} : f \in \ell^2_n\}
    \end{align*}
    for each $n \in \bbN$.
    Then $H_n \subseteq H_{n + 1}$ and $\bigcup_{n \in \bbN} H_n = H_{\nu, m}$. Moreover, $T_n \coloneqq T\restricted{H_n}$ has the multiplicity function $m_n \coloneqq \min(m, n)$ with $\nu$ being a maximal spectral measure of $T_n$. Thus, by the first part of the proof
    \begin{align*}
        h_\mathrm{apr}(T, H_n) = h_\mathrm{apr}(T_n) = \int_{\Delta_{T}} m_n \, \mathrm{d} \mathfrak{m}.
    \end{align*}
    Now increasing $n \to \infty$, we obtain on the right-hand side of the equation from the monotone convergence theorem that $\sup_{n \in \bbN} \int_{\Delta_{T}} m_n \, \mathrm{d} \mathfrak{m} = \int_{\Delta_T} m \, \mathrm{d} \mathfrak{m}$. On the left-hand side we have $h_\mathrm{apr}(T, H_n) = h_\mathrm{apr}(T_n)$ and obtain from Lemma~\ref{lem:PropFourOneVoicu} together with Proposition~\ref{prop:generator-entropy} that $\sup_{n \in \bbN} h_\mathrm{apr}(T_n) = h_\mathrm{apr}(T_{\nu, m}) = h_\mathrm{apr}(T)$.
\end{proof}

For different constructions of a representation of normal operators using Hahn--Hellinger theory we refer to~\cite[Theorems~IX.10.20 and~IX.10.21]{Conway-CourseFunctionalAnalysis-1985},~\cite[Theorems~1.34 and~1.36]{Nadkarni-SpectralTheoryDynamicalSystems-2020} or~\cite[Theorems~II.4 and~II.11]{Queffelec-SubstitutionDynamicalSystemsSpectralAnalysis-2010}.
Spectral theory of dynamical systems is surveyed in~\cite{KatokThouvenot-SpectralPropertiesCombinatorialConstructions-2006}; more specifically to Section~2.1 therein for more information on dynamical systems with countable Lebesgue part. The latter is still an active field of research~\cite{Lemanczyk-OnSomeSpectralProblemsInErgodicTheory-2024} and has been for some time~\cite[Part~III]{CornfeldFominSinai-ErgodicTheory-1982}.

\subsection*{Acknowledgements}
The first named author is funded by the Deutsche Forschungsgemeinschaft (DFG, German Research Foundation) – 530703788.
The authors thank Henrik Kreidler for pointing us to the reference~\cite{Voiculescu-DynamicalApproximationEntropies-1995}.

\printbibliography%

\end{document}